\numberwithin{equation}{section}
\newcommand{\qtq}[1]{\quad\text{#1}\quad}
\theoremstyle{definition}
\newtheorem{definition}{Definition}
\theoremstyle{plain}
\newtheorem{theorem}{Theorem}
\newtheorem*{theorem*}{Theorem}
\newtheorem{lemma}{Lemma}
\theoremstyle{remark}
\newtheorem*{claim*}{Claim}
\newtheorem{remark}[]{Remark}
\newtheorem*{remark*}{Remark}
\newtheorem*{example*}{Example}
\newcommand{\eps}{\varepsilon}
\DeclareMathOperator{\R}{\mathbb{R}}
\DeclareMathOperator{\C}{\mathbb{C}}
\DeclareMathOperator{\bigO}{\mathcal{O}}
\DeclareMathOperator{\F}{\mathcal{F}}
\DeclareMathOperator{\K}{\mathcal{K}}
\DeclareMathOperator{\M}{\mathcal{M}}
\newcommand{\sW}{\mathscr{W}}
\DeclareMathOperator{\Sw}{\mathcal{S}}
\newcommand{\jbrak}[1]{\langle#1\rangle}
\renewcommand{\d}{\mathrm{d}}
\renewcommand{\:}{\colon}
\newcommand{\bbar}{\overline}
\renewcommand{\tilde}{\widetilde}
\renewcommand{\hat}{\widehat}
\renewcommand{\d}{\mathrm{d}}
\newcommand{\dxi}{\, \mathrm{d}\xi}
\newcommand{\dx}{\, \mathrm{d}x}
\newcommand{\dy}{\, \mathrm{d}y}
\newcommand{\ds}{\, \mathrm{d}s}
\newcommand{\dz}{\, \d z}
\newcommand{\dr}{\, \d r}
\newcommand{\cR}{\mathcal{R}}
\newcommand{\cD}{\mathcal{D}}
\newcommand{\defe}{\overset{\mathrm{def}}{=}}
\renewcommand{\rm}[1]{\mathrm{#1}}
\title[Modified Scattering for Nonlocal NLS]{Modified Scattering for Nonlocal Nonlinear Schr\"odinger Equations}
\author{Tim Van Hoose}
\begin{document}
    \begin{abstract}
        We prove a modified scattering and sharp $L^\infty$ decay result for both the Hartree and Schr\"odinger-Bopp-Podolsky equations in dimensions $2$ and $3$ using the testing by wavepackets approach due to Ifrim and Tataru \cite{ifrimGlobalBoundsCubic2014,ifrimTestingWavePackets2022a}. We show that modified scattering and sharp pointwise decay occur for these equations at a regularity much lower than previous results due to Hayashi-Naumkin and Kato-Pusateri \cite{hayashiAsymptoticsLargeTime1998a,katoNewProofLong2010a}, and as a corollary also show that the results on power-type scattering-critical NLS due to \cite{hayashiAsymptoticsLargeTime1998a} can be proven under minimal regularity assumptions. 
    \end{abstract}
\maketitle

\section{Introduction}
    We will consider the long-time behavior for suitably small solutions for two different nonlocal nonlinear Schr\"odinger equations: the Hartree equation, given by 
    \begin{equation}\label{E:VNLS}
        \begin{cases}
            i\partial_t u + \Delta u = (|\cdot|^{-1} \ast |u|^2)u \\
            u(0, x) = u_0(x)
        \end{cases}
        \quad \text{on} \R_t \times \R_x^d,
    \end{equation}
    and the Schr\"odinger-Bopp-Podolsky equation, given by 
    \begin{equation}\label{E:SBP}
        \begin{cases}
            i\partial_t u + \Delta u = (\K \ast |u|^2)u - |u|^{\frac{2}{d}}u \\
            u(0, x) = u_0(x) 
        \end{cases}\text{on} \R_t \times \R_x^d.
    \end{equation}
    In both cases, we assume that $d \in \{2, 3\}$. Here the kernel $\K$ is the Bopp-Podolsky potential, given by 
    \begin{equation*}
        \K(x) = \frac{1-e^{-|x|}}{|x|}.
    \end{equation*}
    Our primary goal is to prove a modified scattering result using the testing by wavepackets technique of Ifrim and Tataru \cite{ifrimGlobalBoundsCubic2014,ifrimTestingWavePackets2022a}. As a consequence, we obtain modified scattering well below the `classical' regularity of $H^{\frac{d}{2}+, \frac{d}{2}+}$; our result works for data in the class $H^{0, \frac{d}{2}+}$. Here, the space $H^{\gamma, \nu}(\R^d)$ is a weighted $L^2$-Sobolev space defined by the norm \eqref{E:wgtSobnorm}, and by $\frac{d}{2}+$ we mean $\frac{d}{2}+\eps$ for any $\eps > 0$.
    
    Before we state our main theorem, we fix once and for all the parameter 
    \begin{equation}
        \beta = \frac{d}{2}+\frac{1}{10}
    \end{equation}
    which governs the number of spatial weights which we use for the main result. With this notation in hand, we can now state our main theorem:
    \begin{theorem}\label{T:maintheorem}
            Let $d \in \{2, 3\}$, $0 < \eps \ll 1$, and $\|u_0\|_{H^{0, \beta}(\R^d)} =\eps$. Then there exists a unique global solution $u(t,x)$ to either \eqref{E:VNLS} or \eqref{E:SBP} belonging to $H^{0, \beta}$ in the sense that $e^{-it\Delta}u \in L_t^\infty H_x^{0, \beta}(\R \times \R^d)$. Furthermore the following additional properties hold for the solution $u$: 
            \begin{enumerate}
                \item Sharp $L^\infty$ decay and energy growth:
                    The solution $u$ satisfies the estimates 
                    \begin{equation} \label{E:sharpdecay}
                        \|u\|_{L^\infty} \lesssim \eps |t|^{-\frac{d}{2}}
                    \end{equation}
                    and
                    \begin{equation}\label{E:energygrowth}
                        \|e^{-it\Delta}u\|_{H^{0, \beta}} \lesssim 2\eps\jbrak{t}^{C\eps^{3-\frac{1}{d}}}.
                    \end{equation}
                \item Modified scattering: If $u$ is a solution to \eqref{E:VNLS}, then there exists a profile $\sW \in L^\infty(\R^d)$ so that (in the $L^\infty$ topology)
                \begin{equation}
                    u(t, x) = t^{-\frac{d}{2}}e^{i\frac{|x|^2}{4t}}\sW\left(\frac{x}{2t}\right)e^{-\frac{i}{2}\log(t)(|\cdot|^{-1} \ast |\sW(\frac{x}{2t})|^2)} + \mathcal{O}(t^{-\frac{d}{2}-\frac{1}{10d}}) \text{ as } t \to \infty.
                \end{equation}
                Further, if $u$ solves $\eqref{E:SBP}$, there exists a profile $\mathscr{Q} \in L^\infty(\R^d)$ so that in the $L^\infty$ topology 
                \begin{equation}
                    u(t,x) = t^{-\frac{d}{2}}e^{i\frac{|x|^2}{4t}}\mathscr{Q}\left(\frac{x}{2t}\right)e^{-\frac{i}{2}\log(t)\left(\K \ast |\mathscr{Q}(\frac{x}{2t})|^2) - |\mathscr{Q}(\frac{x}{2t})|^\frac{2}{d}\right)} + \mathcal{O}(t^{-\frac{d}{2}-\frac{1}{10d}}) \text{ as } t \to \infty.
                \end{equation}
            \end{enumerate}
    \end{theorem}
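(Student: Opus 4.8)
The plan is to use the testing-by-wavepackets method of Ifrim–Tataru, which reduces the analysis of the PDE to an ODE for a suitable "asymptotic profile." The starting point is a local well-posedness and continuity argument: by standard fixed-point theory in $H^{0,\beta}$ one obtains a local solution, and the key is a bootstrap argument that upgrades a priori bounds. I would run the bootstrap on two quantities: the energy-type norm $\|e^{-it\Delta}u(t)\|_{H^{0,\beta}}$ (which we allow to grow slowly, like $\jbrak{t}^{C\eps^{3-1/d}}$), and the pointwise decay $\|u(t)\|_{L^\infty} \lesssim \eps |t|^{-d/2}$. The energy estimate comes from differentiating $\|e^{-it\Delta}u\|_{H^{0,\beta}}^2$ in time, commuting the weight $x$ with the equation using the standard identity $x = e^{it\Delta}(x+2it\nabla)e^{-it\Delta}$, and estimating the resulting nonlinear terms; crucially, the nonlocal potential $|\cdot|^{-1}\ast|u|^2$ (resp.\ $\K\ast|u|^2$) is bounded in $L^\infty$ by interpolating between the $L^\infty$ decay of $u$ and the energy norm, and one pays a logarithmic-type loss that is absorbed into the $\jbrak{t}^{C\eps^{3-1/d}}$ factor. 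The $L^\infty$ decay is the heart of the matter and is obtained from the wavepacket testing.

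For the wavepacket step I would define, for each velocity $v \in \R^d$, a wavepacket $\Psi_v(t,x)$ concentrated at spatial location $x \approx 2tv$ with frequency $\approx v$ and width $\approx t^{1/2}$, normalized so that $\langle u(t), \Psi_v(t)\rangle$ captures, up to acceptable errors, the value $t^{d/2} e^{-i|x|^2/4t} u(t,x)$ at $x = 2tv$. Writing $\gamma(t,v) = \langle u(t), \Psi_v(t)\rangle$, one differentiates in $t$ and uses the equation to derive
\[
    \partial_t \gamma(t,v) = \frac{1}{t}\, \mathfrak{m}(t,v)\, |\gamma(t,v)|^2 \gamma(t,v) + \text{Err}(t,v),
\]
where $\mathfrak{m}(t,v)$ encodes the nonlocal kernel evaluated in the relevant asymptotic regime — for Hartree it converges to $-\tfrac12 (|\cdot|^{-1}\ast|\gamma|^2)(v)$ and for SBP to $-\tfrac12\big((\K\ast|\gamma|^2)(v) - |\gamma|^{2/d}\big)$, using that $\widehat{\K}$ is a smooth perturbation of $\widehat{|\cdot|^{-1}}$ and that the defocusing power nonlinearity is scattering-critical in dimension $d$ — and $\text{Err}$ is controlled in $L^\infty_v$ by $t^{-1-\delta}$ for some $\delta > 0$ using the energy bound, the inductive $L^\infty$ bound, and the localization properties of the wavepacket (this is where the number of weights $\beta = d/2 + 1/10$ enters, giving $\delta$ of order $1/(10d)$). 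Since the cubic phase term does not change $|\gamma|$, one gets $|\gamma(t,v)|$ bounded (closing the $L^\infty$ bootstrap via the relation between $\gamma$ and $t^{d/2}u$), and then $\partial_t\big(\gamma e^{\frac{i}{2}\log t(\cdots)}\big)$ is integrable in $t$, which produces the limiting profile $\sW$ (resp.\ $\mathscr{Q}$) in $L^\infty$ and the modified scattering asymptotics with the stated $\mathcal{O}(t^{-d/2 - 1/10d})$ remainder.

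The main obstacle I anticipate is twofold. First, the error analysis in the ODE must be carried out with only $\beta = d/2 + 1/10$ spatial weights and \emph{no} Sobolev regularity ($\gamma = 0$), so every integration by parts in frequency that one would normally use to gain decay must instead be replaced by exploiting the weight and the oscillation of the Schrödinger group via stationary/non-stationary phase; keeping track of exactly how many powers of $t$ one gains versus loses — and verifying that the nonlocal convolution does not destroy the localization of the wavepacket when one estimates $(|\cdot|^{-1}\ast|u|^2)u$ tested against $\Psi_v$ — is the delicate bookkeeping. Second, for the Hartree potential $|\cdot|^{-1}$ the singularity at the origin in physical space (equivalently the non-smoothness of $|\xi|^{-(d-1)}$ at $\xi = 0$) requires care: one must split the convolution into a near-diagonal piece (handled by Hardy/Hardy–Littlewood–Sobolev using the weighted norm) and a far piece, and check that the near-diagonal piece contributes only to the error and not to the resonant ODE coefficient. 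Handling $\K$ is then easier since $\K$ is bounded and $\widehat{\K}(\xi) = \widehat{|\cdot|^{-1}}(\xi) - (2\pi)^{-d/2}\big(|\xi|^2+1\big)^{-1}\cdot c_d$-type expression decays, so $\K \ast |u|^2$ is even better behaved, and the additional $|u|^{2/d}u$ term is a standard scattering-critical power nonlinearity whose contribution to the ODE is the claimed local term. Once these estimates are in place, the bootstrap closes for $\eps$ small, giving global existence, uniqueness in the stated class, the decay and energy-growth bounds, and the modified scattering statements simultaneously.
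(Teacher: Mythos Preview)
Your overall architecture---local theory, bootstrap on $\|\jbrak{J}^\beta u\|_{L^2}$ and $\|u\|_{L^\infty}$, wavepacket testing to derive an ODE for $\gamma$, integrating factor to extract the profile---matches the paper. Two points, however, need correction or sharpening.

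First, the ODE you wrote is wrong as stated: you have $\partial_t\gamma = t^{-1}\mathfrak{m}\,|\gamma|^2\gamma + \mathrm{Err}$ with $\mathfrak{m}\to -\tfrac12(|\cdot|^{-1}\ast|\gamma|^2)$, which would make the leading term quintic. The actual ODE (and the one your later integrating-factor discussion implicitly uses) is $\partial_t\gamma = \tfrac{i}{2t}(|\cdot|^{-1}\ast|\gamma|^2)\gamma + \cR$; the potential \emph{is} the nonlocal coefficient, and there is no extra $|\gamma|^2$. This is likely a slip, but it matters for the error decomposition: the remainder splits naturally into $\cR_2 = \langle u\bbar{\Psi_v},\,|\cdot|^{-1}\ast(|u|^2-|u(2tv)|^2)\rangle$ and $\cR_3 = \gamma\cdot\bigl(|\cdot|^{-1}\ast(|u(2tv)|^2-(2t)^{-d}|\gamma|^2)\bigr)$, and the gain in each comes from the \emph{difference} structure, not from a near/far split of the kernel.

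Second, and more substantively, the mechanisms you propose for the error analysis---stationary/non-stationary phase and a near-diagonal/far decomposition of $|\cdot|^{-1}$---are not what makes the argument close at this regularity. The paper's device is simpler and sharper: since $|\cdot|^{-1}\in L^{d,\infty}$, the endpoint convolution estimate $\||\cdot|^{-1}\ast f\|_{L^\infty}\lesssim\|f\|_{L^{d/(d-1),1}}$ (O'Neil) holds, and one feeds this with the real-interpolation bound $\|g\|_{L^{2d/(d-1),2}}\lesssim\|g\|_{L^2}^{(d-1)/d}\|g\|_{L^\infty}^{1/d}$ applied to each factor of a difference of squares. The decay gain then comes not from oscillation but from the elementary embedding $\dot H^\beta\hookrightarrow C^{0,\beta-d/2}$ applied to $w=\M(-t)u$, which gives $|w(2t(v-z))-w(2tv)|\lesssim|z|^{\beta-d/2}\||\nabla_v|^\beta w\|_{L^2}$ and hence an extra $t^{-(\beta-d/2)/2}$ after integrating against the wavepacket envelope. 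Your near/far splitting could in principle be made to work, but you would still need the $L^\infty$ endpoint somewhere, and without the Lorentz interpolation it is not clear how you get it with only $H^{0,\beta}$ control; this is the missing ingredient in your proposal.
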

Our result substantially improves upon the modified scattering results of \cite{hayashiAsymptoticsLargeTime1998a,katoNewProofLong2010a} in the sense that we require only spatially localized data (as evidenced by the presence of spatial weights in the definition of the $H^{0, \nu}$-norm). Further, we are able to significantly sharpen the result of \cite{wadaAsymptoticExpansionSolution2001}, which also eliminates the requirement of spatial regularity, albeit at the cost of requiring additional spatial weight: their modified scattering result takes place in the regularity class $H^{0, (\frac{d}{2}+2)+}$, as compared to our result, which eliminates the extra $2$ spatial weights appearing above. Also, our techniques allow us to establish modified scattering in the aforementioned regularity class for the \emph{power-type} scattering-critical NLS, which was treated in \cite{hayashiAsymptoticsLargeTime1998a} and in the $1d$ cubic regime in \cite{katoNewProofLong2010a}. This effectively follows from the proof of modified scattering for \eqref{E:SBP} when one removes the nonlocal term. By simple modifications of the statements of the theorems, we are also able to treat all the way down to the endpoint $\frac{d}{2}$, which is sharp, owing to our need to use Sobolev embedding to turn our estimates in $H^{0, \beta}$ into pointwise estimates, \emph{viz} \eqref{E:GNarg} below; the main results of this paper are only stated in terms of the fixed parameter $\beta$ for clarity. 
    
The models we address both arise in electrodynamics: originally, the Hartree model was used to study many-electron atoms; the equation above is in some sense a limiting version of this model. On the other hand, the Bopp--Podolsky model above due (independently) to Bopp and Podolsky \cite{bopp_lineare_1940, podolsky_generalized_1942} was introduced to handle certain nonphysical divergences arising in classical electrodynamics; one can think of the potential appearing in \eqref{E:SBP} as a screened version of the Coulomb potential. For other works about the Bopp-Podolsky model, we direct the reader to the references \cite{gao_nonlinear_2023, zheng_existence_2022, davenia_nonlinear_2019}, among others. 

Our main result is a modified scattering result, showing that small initial data give rise to solutions with an explicit asymptotic expansion which is essentially scattering behavior (compare the expansion to e.g. the Fraunhofer formula for solutions to linear Schr\"odinger), but with the addition of a logarithmic phase correction, coming from the \textit{scattering criticality} of the equations. This criticality refers to the fact that in both \eqref{E:VNLS} and \eqref{E:SBP}, the nonlinear terms have growth rates which exactly match the decay expected by linear solutions to the Schr\"odinger equation; that is, for any Schwartz function $\phi$,
    \begin{align*}
        \| |\cdot|^{-1} \ast |e^{it\Delta}\phi|^2 \|_{L^\infty} &\lesssim_\phi |t|^{-1}\\
        \| \K \ast |e^{it\Delta}\phi|^2 \|_{L^\infty} &\lesssim_\phi |t|^{-1} \\
        \| |e^{it\Delta} \phi|^{\frac{2}{d}}\|_{L^\infty} &\lesssim_\phi |t|^{-1}.
    \end{align*}
In particular, this provides a (heuristic) obstruction to scattering; we would need these powers of $t$ to be integrable near infinity for our solution to scatter to a linear solution; however, as long as we account for an additional phase correction coming from the slow growth of the nonlinearity near infinity, we can still recover a similar asymptotic expansion. 
    
As previously indicated, our argument will use the method of testing by wavepackets, due to Ifrim and Tataru \cite{ifrimGlobalBoundsCubic2014,ifrimTestingWavePackets2022a}. Briefly, this method proceeds by testing the solution $u$ against a wavepacket $\Psi_v(t, x)$, which is effectively a Schwartz function which approximately solves the linear Schr\"odinger equation, up to a quantifiable error (see \Cref{L:WPsol}). After proving that the equation for $u$ is globally well-posed in $H^{0, \beta}$, we prove \Cref{L:gammabounds}, which explicitly shows the sense in which $\gamma$ approximates $u$. Finally, we derive an approximate ODE for $\gamma$; showing that the error in this ODE is integrable in time will in the end allow us to close a bootstrap argument, establishing both the energy growth and time decay claimed in the main theorem. 
    
After closing the bootstrap, we explicitly solve the ODE for $\gamma$ using an integrating factor, which after some manipulations and the bounds in \Cref{L:gammabounds} explicitly gives the asymptotic expansion claimed above. For more background on this method, or more context on methods for establishing modified scattering, we direct the reader to any of the references \cite{hayashiAsymptoticsLargeTime1998a,katoNewProofLong2010a,ifrimTestingWavePackets2022a,ifrimGlobalBoundsCubic2014,okamotoLongtimeBehaviorSolutions2017,murphyReviewModifiedScattering2021,wadaAsymptoticExpansionSolution2001}.

We also highlight another related work which uses the testing by wavepackets method, and may provide an alternate perspective to our work. In the work of \cite{cloos_long-time_2020}, the author considers long-time behavior for the Dirac-Maxwell equation, a similar nonlocal model to the ones considered here, albeit based on the Klein-Gordon equation rather than the Schr\"odinger equation.


We organize the remainder of the paper as follows: in \Cref{S:notation}, we compile some notation, lemmas and definitions for quantities that will be used in the remainder of the paper. In \Cref{S:GWP}, we will prove the global well-posedness result we claimed earlier and the bounds along rays used to prove the first part of \Cref{T:maintheorem}. Finally, in \Cref{S:asymptotics}, we will use the results of \Cref{S:GWP} to write down the asymptotic expansions appearing as the second part of \Cref{T:maintheorem}.
    \subsection*{Acknowledgements} We are thankful to Jeremy Marzuola for his guidance and support, for many helpful conversations about the problem, and his careful reading of the paper. T.V.H. was supported by the Simons Dissertation Fellowship and by NSF Grant DMS-2307384.

\section{Notation}\label{S:notation}
    In this section, we introduce some notation that will be used throughout the remainder of the paper.\par
    We write $A \lesssim B$ or $B \gtrsim A$ to denote the inequality $A \leq CB$ for some constant $C > 0$, where $C$ may depend on parameters like the dimension or the indices of function spaces. If $A \lesssim B $ and $B \lesssim A$ both hold, then we write $A \sim B$. We will also make use of the standard Landau symbol $\bigO$, as well as the Japanese bracket notation $\jbrak{\cdot} := (1+|\cdot|^2)^{\frac{1}{2}}$. \par
    We use the standard Lorentz spaces $L^{p,q}(\R^d)$, specifically the fact that $|\cdot|^{-1} \in L^{d, \infty}(\R^d)$. These spaces are defined by the (quasi)norm 
    \begin{equation*}
        \|f\|_{L^{p,q}(\R^d)} :=
        \begin{cases}
                   p^{\frac{1}{q}} \left(\displaystyle\int_0^\infty t^q \ m\{x \in \R^d \: |f(x)| \geq \lambda\}^{\frac{q}{p}}\frac{\d \lambda} {\lambda}\right)^{\frac{1}{q}}, q< \infty \\
                   \displaystyle\sup_{\lambda >0} \lambda^p \ m\{x \in \R^d \: |f(x)| \geq \lambda\}, q = \infty
        \end{cases}     
    \end{equation*}
    where $m$ is $d$-dimensional Lebesgue measure.

    
    We will denote the Fourier transform by $\F[f](\xi) = \widehat{f}(\xi)$ with the normalization 
    \begin{equation*}
        \F[f](\xi) := (2\pi)^{-\frac{d}{2}} \int_{\R^d} e^{-ix \cdot\xi} f(x) \dx 
    \end{equation*}
    and inverse 
    \begin{equation}
        \F^{-1}[g](x) = \check{g}(x) := (2\pi)^{-\frac{d}{2}} \int_{\R^d} e^{ix\cdot\xi} g(\xi)\d \xi.
    \end{equation}
    We will define the weighted Sobolev spaces $H^{\gamma, \nu}(\R^d)$ by the norm
    \begin{equation}\label{E:wgtSobnorm}
        \|u\|_{H^{\gamma, \nu}} := \|\jbrak{\nabla}^\gamma u\|_{L^2} + \| |x|^\nu u\|_{L^2}
    \end{equation}
    where as usual $\jbrak{\nabla}^\gamma := \F^{-1} \jbrak{\xi}^\gamma \F$, and we define the standard Sobolev spaces $H^s(\R^d):= H^{s, 0}(\R^d)$ in the notation above. \par
    In the usual way, we will denote the free Schr\"odinger propagator by $e^{it\Delta}$. Direct computation shows that we can decompose this operator as 
    \begin{equation}\label{E:MDFM}
        e^{it\Delta} = \M(t) \cD(t) \F \M(t)
    \end{equation}
    where 
    \begin{equation}
        \M(t)f(x) = e^{i\frac{|x|^2}{4t}} f(x) \qquad \text{and} \qquad \cD(t) = (2it)^{-\frac{d}{2}} f\left(\frac{x}{2t}\right).
    \end{equation}
    By direct computation, we see that 
    \begin{equation}
        \cD(t)^{-1} = (2i)^{d} \cD\left(\frac{1}{t}\right) .
    \end{equation}
    We will also make use of the Galilean operator $J(t):= x + 2it\nabla$. Direct computation shows that
    \begin{equation}\label{E:Jtdefn1}
        J(t) = \M(t) (2it\nabla) \M(-t).
    \end{equation}
    Indeed, we have 
    \begin{align*}
        \M(t)(2it\nabla)\M(-t) f &= e^{i\frac{|x|^2}{4t}}(2it\nabla)e^{-i\frac{|x|^2}{4t}}f \\
        &= e^{i\frac{|x|^2}{4t}}e^{-i\frac{|x|^2}{4t}} (x+2it\nabla)f \\
        &= J(t)f.
    \end{align*}
    An ODE argument furnishes the identity 
    \begin{equation}\label{E:Jtdefn2}
        J(t) = e^{it\Delta}xe^{-it\Delta}.
    \end{equation}
    Indeed, both sides of the equation match at $t=0$. If we take a time derivative on both sides, we find that 
    \begin{align*}
        J'(t) &= 2i\nabla \\
        \frac{d}{dt} [e^{it\Delta} x e^{-it\Delta}] &= i e^{it\Delta} [\Delta, x] e^{-it\Delta}
    \end{align*}
    where $[\cdot, \cdot]$ is the usual operator commutator. One can check directly that $[\Delta, x] = 2\nabla$. Since Fourier multipliers commute and $(e^{it\Delta})_{t \in \R}$ is a semigroup, we see 
    \begin{equation*}
        i e^{it\Delta} [\Delta, x] e^{-it\Delta} = 2i\nabla.
    \end{equation*} 
    Since the time derivatives match for all $t$ and the two expressions have matching values at $t=0$, ODE uniqueness implies that $J(t) = e^{it\Delta}x e^{-it\Delta}$ for all $t$, which was what we claimed. 
    
    \begin{remark*}
        Since $e^{it\Delta}: L^2 \to L^2$ is unitary, we see that $ \| J(t)u\|_{L^2} = \|x e^{-it\Delta}u\|_{L^2}.$
    \end{remark*}
    We also define powers of $J(t)$ in the following manner:
    \begin{align}
        |J|^{\gamma}(t) &:= \M(t)(-4t^2 \Delta)^{\frac{\gamma}{2}}\M(-t) \quad \text{for }\gamma\in[0, \infty) \label{E:Jtpower}\\
        &= e^{it\Delta}|x|^\gamma e^{-it\Delta}. \label{E:Jtpower2}
    \end{align}
    Finally, we will need an abstract interpolation result from \cite{berghInterpolationSpacesIntroduction1976}, which we have specialized to our particular case. This will allow us to directly estimate convolutions against $|x|^{-1}$ in $L^\infty$, which is an inadmissible endpoint for Hardy-Littlewood-Sobolev. 
    
    \begin{lemma}[Real Interpolation of $L^p$ spaces]\label{L:interplemma}
        If $1 \leq p_0 \neq p_1 \leq \infty$, then we have the equality of spaces (with equivalent norms) 
        \begin{equation}
            [L^{p_0}, L^{p_1}]_{\theta, q} = L^{p, q},
        \end{equation}
        where $\frac{1}{p} = \frac{1-\theta}{p_0} + \frac{\theta}{p_1}$ and $0 < \theta < 1$. Further, it holds that 
        \begin{equation}
            \|u\|_{L^{p, q}} \lesssim \|u\|_{L^{p_0}}^{1-\theta} \|u\|_{L^{p_1}}^\theta.
        \end{equation}
    \end{lemma}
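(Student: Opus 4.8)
This statement is, up to specialization, \cite[Theorem~5.3.1]{berghInterpolationSpacesIntroduction1976}, so the plan is not to reprove the whole theory of real interpolation but to recall the shape of that argument and then extract the quantitative inequality, which is the form we actually use. The heart of the matter is the computation of the $K$-functional of the Banach couple $(L^{p_0}, L^{p_1})$. Writing $f^*$ for the nonincreasing rearrangement of $f$ on $(0,\infty)$, and assuming without loss of generality $p_0 < p_1$, the standard formula reads, for the positive exponent $r$ determined by $\tfrac1r = \tfrac1{p_0} - \tfrac1{p_1}$,
\[
    K(t, f; L^{p_0}, L^{p_1}) \sim \left( \int_0^{t^r} f^*(s)^{p_0} \ds \right)^{1/p_0} + t\left( \int_{t^r}^\infty f^*(s)^{p_1} \ds \right)^{1/p_1},
\]
with the obvious modification when $p_1 = \infty$. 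First I would establish this identity; the economical route is to optimize the splitting $f = g + h$ over truncations of $f$ at the height of its level sets and invoke the Calder\'on--Mityagin rearrangement inequality to pass to $f^*$.

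Next I would substitute this formula into the definition
\[
    \|f\|_{[L^{p_0}, L^{p_1}]_{\theta, q}} = \left( \int_0^\infty \bigl( t^{-\theta} K(t, f) \bigr)^q \frac{\d t}{t} \right)^{1/q},
\]
perform the change of variables $s = t^r$, and apply Hardy's inequality to each of the two weighted integrals in $s$. After collecting exponents one finds the right-hand side comparable to $\bigl( \int_0^\infty (s^{1/p} f^*(s))^q \frac{\d s}{s} \bigr)^{1/q}$ with $\tfrac1p = \tfrac{1-\theta}{p_0} + \tfrac{\theta}{p_1}$, which is an equivalent quasinorm for $L^{p,q}(\R^d)$. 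The cases $q = \infty$ and $p_1 = \infty$ are handled by the identical computation with the relevant integral replaced by a supremum. This yields the identity of spaces with equivalent norms.

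For the quantitative inequality I would argue directly and not through the norm equivalence alone: if $u \in L^{p_0} \cap L^{p_1}$, the trivial decompositions $u = u + 0$ and $u = 0 + u$ give $K(t, u) \le \min\{\|u\|_{L^{p_0}}, \, t\|u\|_{L^{p_1}}\}$ for every $t>0$. Splitting the defining integral at $t_* = \|u\|_{L^{p_0}} / \|u\|_{L^{p_1}}$, bounding $K(t,u)$ by $t\|u\|_{L^{p_1}}$ on $(0, t_*)$ and by $\|u\|_{L^{p_0}}$ on $(t_*, \infty)$, both pieces converge — this is exactly where $0 < \theta < 1$ enters, making the two power weights integrable on their respective half-lines — and each evaluates to a constant multiple of $\|u\|_{L^{p_0}}^{(1-\theta)q} \|u\|_{L^{p_1}}^{\theta q}$. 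Taking $q$-th roots and invoking the norm equivalence from the previous step gives $\|u\|_{L^{p,q}} \lesssim \|u\|_{L^{p_0}}^{1-\theta} \|u\|_{L^{p_1}}^\theta$.

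The only genuinely nontrivial point is the $K$-functional identity, since it rests on rearrangement inequalities rather than soft functional analysis; everything downstream is bookkeeping with power weights. If one preferred to sidestep the rearrangement formula, an alternative is to prove the two continuous inclusions $[L^{p_0}, L^{p_1}]_{\theta,q} \hookrightarrow L^{p,q} \hookrightarrow [L^{p_0}, L^{p_1}]_{\theta,q}$ separately: the forward inclusion follows from the elementary $K$-bound above applied to simple functions together with a density argument, and the reverse from decomposing $f \in L^{p,q}$ dyadically along the level sets $\{2^k \le |f| < 2^{k+1}\}$ and summing the resulting estimates for $K(2^j, f)$.
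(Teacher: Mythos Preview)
Your proposal is correct. The paper's own treatment is purely citational: it invokes \cite[Theorem~5.3.1]{berghInterpolationSpacesIntroduction1976} for the identification of spaces, and for the norm inequality it appeals to the abstract machinery of interpolation classes in \cite[\S3.5]{berghInterpolationSpacesIntroduction1976}, namely that $[L^{p_0},L^{p_1}]_{\theta,q}$ is of class $\mathscr{C}(\theta)$ and hence of class $\mathscr{C}_J(\theta)$, which by definition yields $\|u\|_{L^{p,q}} \lesssim \|u\|_{L^{p_0}}^{1-\theta}\|u\|_{L^{p_1}}^\theta$. Your route is more hands-on: you sketch the actual $K$-functional computation behind Theorem~5.3.1, and for the inequality you give the direct splitting argument at $t_* = \|u\|_{L^{p_0}}/\|u\|_{L^{p_1}}$ from the trivial bound $K(t,u)\le \min\{\|u\|_{L^{p_0}}, t\|u\|_{L^{p_1}}\}$. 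That splitting argument is in fact precisely the content of the implication $\mathscr{C}(\theta)\Rightarrow\mathscr{C}_J(\theta)$ the paper cites, so the two approaches converge at the level of ideas; yours is self-contained while the paper's defers entirely to the reference.
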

    \begin{remark}
        The first part of this lemma is directly from \cite[Theorem 5.3.1]{berghInterpolationSpacesIntroduction1976}. The second half follows from the discussion in \cite[Section 3.5.1f]{berghInterpolationSpacesIntroduction1976}. In our particular instance, this lemma will be applied to the spaces $L^2$ and $L^\infty$. To be completely explicit, we will use that 
        \begin{equation*}
            \|u\|_{L^{\frac{2d}{d-1}, 2}} \lesssim \|u\|_{L^2}^{\frac{d-1}{d}} \|u\|_{L^\infty}^{\frac{1}{d}},
        \end{equation*}
        since $\frac{2d}{d-1} > 2$, and the exponents come directly from \Cref{L:interplemma}. For more explicit details, see \Cref{AS:InterpAppendix}.
    \end{remark}
    Next, we record some of the regularity properties of $\K(x)$ that will be of use to us later.
        \begin{lemma}\label{L:Kregularity}
            Let $\K(x) : \R^d \to \R$ be defined as above. Then 
            \begin{enumerate}
                \item $\K \in L^{d, \infty}(\R^d)$ and $\K \in L^p(\R^d)$ for all $d < p \leq \infty$. 
                \item The following convolution estimate for $\K$ holds:
                \begin{equation*}
                    \|\K \ast f\|_{L^r} \lesssim \|f\|_{L^q}
                \end{equation*}
                where $q < r$ and $1-\tfrac{1}{d} \leq \tfrac{1}{q}-\tfrac{1}{r} \leq 1$.
            \end{enumerate}
        \end{lemma}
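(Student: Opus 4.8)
The key observation is that $\K$ interpolates between the Coulomb potential near the origin and exponential decay at infinity. Concretely, the plan is to split $\K = \K \bone_{\{|x| \le 1\}} + \K \bone_{\{|x| > 1\}} =: \K_0 + \K_\infty$ and analyze each piece. Near the origin, $1 - e^{-|x|} \le |x|$, so $0 \le \K_0(x) \le \bone_{\{|x|\le 1\}}$; more precisely $\K_0(x) \sim |x|^{-1} \cdot |x| = 1$ is bounded, hence $\K_0 \in L^p$ for \emph{every} $1 \le p \le \infty$ — in particular it is bounded, so it contributes nothing problematic. Wait: $\K(x) = \frac{1-e^{-|x|}}{|x|} \to 1$ as $|x| \to 0$, so in fact $\K$ is bounded near the origin; the only subtlety is decay at infinity. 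For $|x| > 1$ we have $|e^{-|x|}/|x|| \le e^{-|x|}$ and $|1/|x|| \le \min(1, |x|^{-1})$, so $\K_\infty(x) \le |x|^{-1}$ with exponential corrections; thus $\K_\infty(x) \lesssim |x|^{-1}\bone_{\{|x|>1\}}$, which lies in $L^p$ precisely for $p > d$ (and in the Lorentz space $L^{d,\infty}$ since $|x|^{-1} \in L^{d,\infty}(\R^d)$ globally and this is a restriction of it). This establishes (1): $\K \in L^{d,\infty}$ because it is dominated by $C|x|^{-1} + (\text{bounded compactly supported})$, and $\K \in L^p$ for all $d < p \le \infty$ because the singular behavior at infinity is integrable to any such power while $\K$ is globally bounded.

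For part (2), the cleanest route is to combine the two classical convolution inequalities available for these pieces. Since $\K = \K_0 + \K_\infty$ with $\K_0 \in L^1 \cap L^\infty$ and $\K_\infty \in L^{d,\infty}$, we get for $\K_\infty$ the weak-type Young / Hardy-Littlewood-Sobolev bound $\|\K_\infty \ast f\|_{L^r} \lesssim \|f\|_{L^q}$ whenever $1 + \tfrac1r = \tfrac1q + \tfrac1{d}$, i.e. $\tfrac1q - \tfrac1r = 1 - \tfrac1d$, with $1 < q < r < \infty$ (this is exactly the endpoint case of the claimed range $1 - \tfrac1d \le \tfrac1q - \tfrac1r$). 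For the remaining part of the range, i.e. $1 - \tfrac1d < \tfrac1q - \tfrac1r \le 1$, one uses $\K_0 \in L^s$ for an appropriate $s \in [1,\infty]$ and ordinary Young's inequality $\|\K_0 \ast f\|_{L^r} \lesssim \|\K_0\|_{L^s}\|f\|_{L^q}$ with $1 + \tfrac1r = \tfrac1s + \tfrac1q$; since $\K_0$ is in every $L^s$, the exponent $s$ determined by $\tfrac1s = 1 - (\tfrac1q - \tfrac1r)$ is admissible exactly when $0 \le \tfrac1q - \tfrac1r \le 1$. Interpolating (or just adding the two bounds on the overlapping range) covers the full claimed region $1 - \tfrac1d \le \tfrac1q - \tfrac1r \le 1$.

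The main obstacle — really the only place requiring care — is the \emph{endpoint} $\tfrac1q - \tfrac1r = 1 - \tfrac1d$, where one genuinely needs $\K_\infty \in L^{d,\infty}$ rather than $L^d$ and must invoke the weak-type version of Young's inequality (or, equivalently, Hardy-Littlewood-Sobolev for the Lorentz-space kernel), since $|x|^{-1} \notin L^d(\R^d)$. Away from that endpoint everything reduces to Young's inequality with the honest $L^s$ membership of $\K_0$, which is immediate from boundedness and compact support. I would present part (1) first (it is needed to even state the convolution estimate), then derive part (2) by the splitting above, flagging the endpoint case as the one needing Lemma-level input on Lorentz spaces.
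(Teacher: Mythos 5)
Your proof is correct in substance and reaches the same conclusion, but it takes a more laborious route than the paper and has a small gap in exposition in part (2). The paper does \emph{not} decompose $\K$ into $\K_0 + \K_\infty$: instead it notes the single pointwise bound $\K(x) \le |x|^{-1}$, which immediately gives $\K \in L^{d,\infty}$; it then computes $\|\K\|_{L^p}^p$ by splitting the \emph{integral} over $B(0,1)$ and its complement (bounding $\K$ by $\|\K\|_{L^\infty}$ inside and by $|x|^{-1}$ outside), and for (2) it applies Young's (and its $L^{p,\infty}$ strengthening at the endpoint) \emph{directly} to $\K$, using that $\K \in L^p$ for every $d < p \le \infty$ and $\K \in L^{d,\infty}$. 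Your decomposition is sound but doesn't actually buy anything, because $\K$ itself already satisfies everything you prove for each piece separately.

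The exposition gap: in part (2), once you split $\K = \K_0 + \K_\infty$, you must bound $\|\K_0 \ast f\|_{L^r}$ \emph{and} $\|\K_\infty \ast f\|_{L^r}$ for the \emph{same} pair $(q,r)$, since $\|\K \ast f\|_{L^r} \le \|\K_0 \ast f\|_{L^r} + \|\K_\infty \ast f\|_{L^r}$. Your write-up handles $\K_\infty$ only at the endpoint $\tfrac1q - \tfrac1r = 1 - \tfrac1d$ (via weak Young), and handles $\K_0$ only in the interior range $1-\tfrac1d < \tfrac1q - \tfrac1r \le 1$ (via ordinary Young). ``Adding the two bounds'' does not cover the full range, because they address different pieces at different exponents. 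The fix is already implicit in your part (1): for the interior range, $\K_\infty \in L^s$ with $s = (1 - (\tfrac1q - \tfrac1r))^{-1} > d$, so ordinary Young also applies to $\K_\infty$; and at the endpoint, $\K_0 \in L^d$ (being bounded and compactly supported), so ordinary Young applies to $\K_0$. Spelling out these two additional cases closes the argument. This is precisely the kind of bookkeeping the paper's non-decomposition approach avoids.
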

        \begin{proof}
            To see the first part of (1), note that we have $\K(x) \leq |x|^{-1}$ pointwise in $x$. This readily implies that $\K(x)$ is in $L^{d, \infty}(\R^d)$ because $x \mapsto |x|^{-1} \in L^{d, \infty}(\R^d)$. For the second part of the (1), we directly integrate:
            \begin{align*}
                \|\K(x)\|_{L^p}^p &= \int_{\R^d} \left(\frac{1-e^{-|x|}}{|x|}\right)^p \dx \\
                &\leq \int_{B(0, 1)} \left(\frac{1-e^{-|x|}}{|x|}\right)^p \dx + \int_{\R^d \setminus B(0, 1)} \left(\frac{1-e^{-|x|}}{|x|}\right)^p \dx \\
                &\lesssim 1 + \int_{\R^d \setminus B(0, 1)} \left(\frac{1-e^{-|x|}}{|x|}\right)^p \dx\\
                &\lesssim 1+ \int_{\R^d \setminus B(0, 1)}  |x|^{-p} \dx \\
                &\lesssim 1+ \int_{S^{d-1}}\int_1^\infty r^{d-1-p} \dr \d \Omega,
            \end{align*}    
            where from the second to the third line we used that $\K \in L^\infty$, and from the third to the fourth line we used that $\K(x) \leq |x|^{-1}$. In the final line we switched to spherical coordinates. In particular, the last integral in spherical coordinates converges if and only if $d - p < 0$, i.e. if $p > d$. \par
            To prove (2), note that by Young's convolution inequality 
            \begin{equation}\label{E:youngs}
                \|\K \ast f\|_{L^r} \lesssim \|\K\|_{L^p}\|f\|_{L^q}
            \end{equation}
            where $1+\tfrac{1}{r} = \tfrac{1}{p}+ \tfrac{1}{q}$ for any $d < p \leq \infty$. Rearranging the former equality gives $\tfrac{1}{q}-\tfrac{1}{r} = 1-\tfrac{1}{p}$. The conditions on $p$ then guarantee that $\tfrac{1}{q}-\tfrac{1}{r} \in (1-\tfrac{1}{d}, 1]$. To conclude, we note that a stronger version of Young's inequality holds, namely that we can replace the $L^p$-norm in \eqref{E:youngs} by $L^{p, \infty}$. This allows us to include $d$ in the range of $p$ for which the inequality holds, and thus include the endpoint $1-\tfrac{1}{d}$. 
        \end{proof}
    \begin{remark}[Convention for convolutions]\label{R:convolution}
        We will frequently work with the operation of convolution with the kernel $|\cdot|^{-1}$. For notational ease, we will write $|\cdot|^{-1} \ast f(t, z)$ to mean the convolution with output variable $z$: 
    \begin{equation*}
        |\cdot|^{-1} \ast f(t,z) := \int_{\R^d} \frac{f(t,y)}{|z-y|} \dy.
    \end{equation*}
    For example, we will often see convolution with functions of the variable $2tv$. We interpret this in light of the definition above, with the replacement $z \mapsto 2tv$. If the argument is suppressed, we take that to mean that the output is the variable $x$. 
    \end{remark}
    \section{Global Existence and Wavepacket Estimates}\label{S:GWP}
    In this section, we will prove the claimed global well-posedness result, along with the estimates along rays that will later prove the first part of \Cref{T:maintheorem}.
        To begin, we prove that \eqref{E:VNLS} and \eqref{E:SBP} are globally well-posed in $H^{0, \beta}$; we will do this by a standard $L^2$ local well-posedness result combined with a persistence of regularity argument.
        \begin{theorem}\label{T:GWP}
            Let $d \in \{2, 3\}$ and set $\beta = \frac{d}{2}+\frac{1}{10}$. Then the equations \eqref{E:VNLS} and \eqref{E:SBP} are globally well-posed in $H^{0, \beta}$, in the sense that they have a unique solution $u \in C_t L_x^2(\R \times \R^d)$ with $|J|^\beta u \in C_tL_x^2(\R \times \R^d)$. Any such solution $u(t, x)$ is in $C_t L_x^\infty$, and near $t = 0$ we have 
            \begin{equation}
                \|u(t, x)\|_{L_x^\infty} \lesssim t^{-\frac{d}{2}} \|u_0\|_{H^{0, \beta}}.
            \end{equation}
        \end{theorem}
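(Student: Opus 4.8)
The plan is to establish \Cref{T:GWP} in three stages: (i) global well-posedness in $L^2$, via a short-time Strichartz contraction whose lifespan depends only on $\|u_0\|_{L^2}$, iterated using conservation of mass; (ii) persistence of the spatial weight, i.e.\ propagation of $|J|^\beta u\in L^2$, by commuting $|J|^\beta$ through the Duhamel formula and running a second contraction/Gr\"onwall argument; and (iii) the dispersive bound near $t=0$, which is a direct consequence of the factorization \eqref{E:MDFM}. The structural facts that make this possible are that both nonlinearities are gauge invariant (hence mass is conserved) and $L^2$-subcritical: a scaling computation gives critical Sobolev exponent $s_c=-\tfrac12$ for the Hartree nonlinearity $(|\cdot|^{-1}\ast|u|^2)u$ and $s_c=-\tfrac d2$ for the power nonlinearity $|u|^{2/d}u$ (note $1+\tfrac2d<1+\tfrac4d$), while $\K$ is no worse than $|\cdot|^{-1}$ by \Cref{L:Kregularity}.

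For stage (i), fix a Schr\"odinger-admissible pair $(q,r)$ — one may take $r=\tfrac{4d}{2d-1}$, $q=8$, admissible for $d\in\{2,3\}$ — and run the standard contraction for $u=e^{it\Delta}u_0-i\int_0^te^{i(t-s)\Delta}N(u)\,ds$ in $C_tL^2_x\cap L^q_tL^r_x([0,T]\times\R^d)$, where $N(u)$ is the nonlinearity of \eqref{E:VNLS} or \eqref{E:SBP}. The needed nonlinear estimates are: for the nonlocal terms, H\"older in $x$ together with the weak-type Young / Hardy--Littlewood--Sobolev inequality (for $|\cdot|^{-1}\in L^{d,\infty}$) and \Cref{L:Kregularity}(2) (for $\K$) place $(|\cdot|^{-1}\ast|u|^2)u$ and $(\K\ast|u|^2)u$ into $L^{q'}_tL^{r'}_x$, gaining a positive power of $T$ from H\"older in time since $3q'<q$; for the power term, $\||u|^{2/d}u\|_{L^{r'}_x}=\|u\|_{L^{(1+2/d)r'}_x}^{1+2/d}$ with the elementary bound $\big||z_1|^{2/d}z_1-|z_2|^{2/d}z_2\big|\lesssim(|z_1|^{2/d}+|z_2|^{2/d})|z_1-z_2|$ for differences. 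Thanks to the gain in $T$, the lifespan depends only on $\|u_0\|_{L^2}$; since that quantity is conserved (multiply the equation by $\bar u$, integrate, take imaginary parts, using that the nonlinearities are real multiples of $u$), the local solution extends to all of $\R$, giving the unique $u\in C_tL^2_x(\R\times\R^d)$.

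Stage (ii) is the heart of the argument. Since $|J|^\beta(t)=e^{it\Delta}|x|^\beta e^{-it\Delta}$ by \eqref{E:Jtpower2}, we have $|J|^\beta(t)e^{i(t-s)\Delta}=e^{i(t-s)\Delta}|J|^\beta(s)$, so applying $|J|^\beta$ to the Duhamel formula gives
\begin{equation*}
    |J|^\beta(t)u(t)=e^{it\Delta}|x|^\beta u_0-i\int_0^te^{i(t-s)\Delta}\,|J|^\beta(s)N(u(s))\,ds,
\end{equation*}
and it remains to bound $|J|^\beta(s)N(u(s))$ in an appropriate Strichartz-dual norm. Here I would use the conjugation identity \eqref{E:Jtpower}, $|J|^\beta(t)=\M(t)(2|t|)^\beta|\nabla|^\beta\M(-t)$, together with the elementary but crucial observation that $|\M(-t)u|=|u|$: this lets $\M(-t)$ pass through $|u|^2$, through the convolutions $|\cdot|^{-1}\ast|u|^2$ and $\K\ast|u|^2$, and through $|u|^{2/d}u$, reducing the problem to estimating $|\nabla|^\beta$ of $(|\cdot|^{-1}\ast|v|^2)v$, $(\K\ast|v|^2)v$, and $|v|^{2/d}v$, where $v=\M(-t)u$ satisfies $\|v\|_{L^p}=\|u\|_{L^p}$ and where $|\nabla|^\beta v$ equals $|J|^\beta u$ up to multiplication by $\M(t)$ and a power of $t$ (which ultimately cancels against the $(2|t|)^\beta$). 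Fractional Leibniz (Kato--Ponce, with its cross terms handled by Gagliardo--Nirenberg interpolation between $v\in L^2$ and $|\nabla|^\beta v$) distributes $|\nabla|^\beta$ across the products and convolutions; the undifferentiated factors are absorbed into Strichartz norms of $u$, the nonlocal kernels into \Cref{L:interplemma} / \Cref{L:Kregularity}; and for $|v|^{2/d}v$ one invokes the fractional chain rule for $w\mapsto|w|^{2/d}w$. This last point is where the magnitude of $\beta$ enters: the chain rule for this non-smooth nonlinearity requires $\beta<1+\tfrac2d$, and indeed $\beta=\tfrac d2+\tfrac1{10}$ satisfies $\beta<1+\tfrac2d$ for $d\in\{2,3\}$ (borderline when $d=3$, where $1.6<\tfrac53$). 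With these estimates, a contraction/Gr\"onwall argument in the weighted Strichartz space, on intervals whose length again depends only on $\|u_0\|_{L^2}$ and iterated over finitely many such intervals, shows $|J|^\beta u\in C_tL^2_x$, with the bound on any compact interval being $\lesssim\|u_0\|_{H^{0,\beta}}$; uniqueness descends from stage (i).

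Finally, for stage (iii): writing $u(t)=e^{it\Delta}\big(e^{-it\Delta}u(t)\big)$ and using \eqref{E:MDFM} with $\|\cD(t)g\|_{L^\infty}=|2t|^{-d/2}\|g\|_{L^\infty}$, $\|\F g\|_{L^\infty}\lesssim\|g\|_{L^1}$, and $|\M(t)|\equiv1$, we get $\|u(t)\|_{L^\infty}\lesssim|t|^{-d/2}\|e^{-it\Delta}u(t)\|_{L^1}$; since $\beta>\tfrac d2$, the bound $\|f\|_{L^1}\leq\|\jbrak{x}^{-\beta}\|_{L^2}\,\|\jbrak{x}^\beta f\|_{L^2}\lesssim\|f\|_{H^{0,\beta}}$ gives $\|u(t)\|_{L^\infty}\lesssim|t|^{-d/2}\|e^{-it\Delta}u(t)\|_{H^{0,\beta}}$ for every $t\neq0$, continuous in $t$ there (and blowing up as $t\to0$, consistent with $u_0\notin L^\infty$ in general); near $t=0$ the right-hand norm is $\lesssim\|u_0\|_{H^{0,\beta}}$ by stage (ii). The main obstacle throughout is stage (ii): keeping the fractional Leibniz / chain-rule bookkeeping honest — in particular checking that the low-power term $|u|^{2/d}u$ survives application of $|\nabla|^\beta$ for the chosen $\beta$ — while arranging every iteration step to have a lifespan controlled purely by the conserved mass.
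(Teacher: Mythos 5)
Your three-stage outline ($L^2$ contraction plus mass conservation, persistence of $|J|^\beta$ via the conjugation $|J|^\beta(t)=\M(t)(2t)^\beta|\nabla|^\beta\M(-t)$ and fractional Leibniz, then a weighted dispersive bound for the $L^\infty$ decay) mirrors the paper's proof almost step for step, so the strategy is sound. Two points of divergence are worth noting. First, for stage~(iii) the paper runs Gagliardo--Nirenberg on $w=\M(-t)u$, i.e.\ $\|w\|_{L^\infty}\lesssim\|w\|_{L^2}^{1-d/(2\beta)}\||\nabla|^\beta w\|_{L^2}^{d/(2\beta)}$, and then converts via $\M(-t)|J|^\beta u=(2it)^\beta|\nabla|^\beta w$; you instead compose the dispersive $L^1\to L^\infty$ bound with Cauchy--Schwarz against $\jbrak{x}^{-\beta}\in L^2$. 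Both routes use $\beta>d/2$ in the same essential way and give the same constant; yours is arguably a shade more transparent. Second, your explicit observation that the fractional chain rule for the non-$C^2$ nonlinearity $|u|^{2/d}u$ forces $\beta<1+2/d$, and that this is genuinely tight at $d=3$ (where $1.6<5/3$), is correct and a useful clarification that the paper leaves implicit.

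There is one small technical snag in stage~(i). With the single unified pair $(q,r)=(8,\tfrac{4d}{2d-1})$ and its dual $(q',r')$, the spatial H\"older step for the power term gives $\||u|^{2/d}u\|_{L^{r'}_x}=\|u\|_{L^{(1+2/d)r'}_x}^{1+2/d}$, and one can check that $(1+\tfrac2d)r'=\tfrac{4(d+2)}{2d+1}>r>2$ for $d\in\{2,3\}$; so this Lebesgue exponent lies strictly above both norms at your disposal ($L^2$ and $L^r$), and the estimate does not close as written. The fix is routine: either target a different dual admissible pair $(\tilde q',\tilde r')$ for this term (the inhomogeneous Strichartz estimate permits mixing), or adopt pairs adapted to the lower-degree nonlinearity as the paper does ($L^4_{T,x}$ for $d=2$, $L^5_TL^{30/11}_x$ for $d=3$), where $\||u|^{2/d}u\|$ splits by H\"older into $\|u\|_{L^\infty_TL^2_x}^{2/d}$ times a Strichartz factor. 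Once that is adjusted, the rest of your argument goes through.
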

        \begin{proof}
            To prove the $L^2$ well-posedness, we rephrase the problem using the Duhamel formula, leading to the equation 
            \begin{equation}
                \Phi[u] := u(t,x) = e^{it\Delta}u_0 -i \int_0^t e^{i(t-s)\Delta}\{(|x|^{-1} \ast |u|^2)u(s)\} \ds.
            \end{equation}
            We have to treat the cases $d = \{2, 3\}$ with different Strichartz norms, but the arguments are entirely identical. To this end, define 
            \begin{equation*}
                A_2 = L_{T,x}^4(\R^2) \qtq{and} A_3 = L_T^5 L_x^{\frac{30}{11}}(\R^3).
            \end{equation*}
            We will run a contraction mapping argument in the space 
            \begin{equation}
                X_{T, d} = \left\{u\: [0, T] \times \R^d \to \C \mid \|u\|_{L_T^\infty L_x^2} \lesssim 2\|u_0\|_{L^2} \text{ and } \|u\|_{A_d}\lesssim 2C\|u_0\|_{L^2}\right\}
            \end{equation}
            endowed with the metric $d(u, v) = \|u-v\|_{L_T^\infty L_x^2}$, and where $C$ encodes all the arbitrary constants that come from Strichartz estimates and Hardy-Littlewood-Sobolev. For the sake of brevity, we present only the key nonlinear estimates here:
            \begin{align*}
                \left\| \int_0^t e^{i(t-s)\Delta}(|x|^{-1} \ast |u|^2)u(s)\ds\right\|_{L_T^\infty L_x^2(\R^2)} &\lesssim \| (|\cdot|^{-1} \ast |u|^2)u(s)\|_{L_{T, x}^{\frac{4}{3}}} \\
                &\lesssim \|u\|_{L_T^\infty L_x^2} \| |\cdot|^{-1} \ast |u|^2\|_{L_T^{\frac{4}{3}}L_x^4} \\
                &\lesssim \|u\|_{L_T^\infty L_x^2} \||u|^2\|_{L_{T, x}^{\frac{4}{3}}} \\
                &\lesssim T^{\frac{1}{2}}\|u\|_{L_T^\infty L_x^2}^2 \|u\|_{A_2},
            \end{align*}
            which is acceptable. The estimate of the $A_2$ component of the norm is entirely identical, by Strichartz. In three spatial dimensions, we have the nonlinear estimate
            \begin{align*}
                \left\| \int_0^t e^{i(t-s)\Delta} (|x|^{-1} \ast |u|^2)u(s)\ds\right\|_{L_t^\infty L_x^2(\R^3)} &\lesssim \| (|x|^{-1} \ast |u|^2)u\|_{L_{T, x}^{\frac{10}{7}}}\\
                &\lesssim \|u\|_{L_T^\infty L_x^2}\| |x|^{-1} \ast |u|^2\|_{L_T^{\frac{10}{7}}L_x^5} \\
                &\lesssim \|u\|_{L_T^\infty L_x^2}\||u|^2\|_{L_T^{\frac{10}{7} }L_x^{\frac{15}{13}}}\\
                &\lesssim T^{\frac{1}{2}}\|u\|_{L_T^\infty L_x^2}^2 \|u\|_{A_3},
            \end{align*}
            where to go from the second to the third line we applied the Hardy-Littlewood-Sobolev inequality. Using the definition of the space $X_{T, d}$, we see that for $T$ small enough depending only on $\|u_0\|_{L^2}$, we have a map from $X_{T, d} \to X_{T, d}$ defined by the right-hand side of the Duhamel formula above. To prove that $\Phi$ is a contraction map, we consider the difference $\|\Phi[u]-\Phi[v]\|_{L_T^\infty L_x^2}$. To estimate this, we see that once again the crucial term is the integral term of the Duhamel formula. To estimate it, we rewrite the difference of convolutions as 
            \begin{multline}
                (|\cdot|^{-1} \ast |u|^2)u - (|\cdot|^{-1} \ast |v|^2)v = (|\cdot|^{-1} \ast |u|^2)(u-v) \\+ (|\cdot|^{-1} \ast (u-v)\bbar{u})v+ (|\cdot|^{-1} \ast (\bbar{u}- \bbar{v})v)v.
            \end{multline}
            Checking that $\Phi$ is a contraction mapping for $T$ sufficiently small proceeds in an entirely analogous way to the estimate above. Hence there is a unique $L^2$ solution to \eqref{E:VNLS}; by conservation of mass, it is global. \par
            We would now like to upgrade to $H^{0, \beta}$ initial data; this will follow from a standard persistence of regularity argument. To do this, we note that the operator $|J|^\beta$ defined above commutes nicely with the linear part of the equation. By Strichartz, the linear term is easily controlled in terms of $\|u_0\|_{H^{0, \beta}}$; it remains to handle the nonlinear term, which takes the form 
            \begin{equation}\label{E:persistenceintegral}
                \M(t) (2it)^\beta |\nabla|^\beta \M(-t) \int_0^t e^{i(t-s)\Delta} (|\cdot|^{-1} \ast |u|^2)u \ds.
            \end{equation}
            Estimating this in $L_T^\infty L_x^2$ and applying Strichartz, we see that it suffices to provide good estimates for 
            \begin{equation}
               \| |\nabla|^\beta [(|\cdot|^{-1} \ast |w|^2)w]\|_{L_{T, x}^{\frac{4}{3}}} \qtq{and} \| |\nabla|^{\beta} [(|\cdot|^{-1} \ast |w|^2)w]\|_{L_{T, x}^{\frac{10}{7}}},
            \end{equation}
            where we write $w = \M(-t)u$. By the fractional chain rule, we can control the terms above by
            \begin{align*}
                \| |\nabla|^\beta [(|\cdot|^{-1} \ast |w|^2)w]\|_{L_{T, x}^{\frac{4}{3}}} &\lesssim
                \begin{multlined}[t]
                    \|w\|_{L_T^\infty L_x^2} \| |\cdot|^{-1} \ast |\nabla|^\beta |w|^2\|_{L_T^\frac{4}{3}L_x^4} \\
                    + \||\nabla|^\beta w \|_{L_T^\infty L_x^2} \| |\cdot|^{-1} \ast |w|^2\|_{L_T^{\frac{4}{3}}L_x^4}
                \end{multlined}\\
                &\lesssim T^{\frac{1}{2}} \|u\|_{L_T^\infty L_x^2} \|u\|_{A_2} \| |\nabla|^\beta w\|_{L_T^\infty L_x^2}
            \end{align*}
            in two spatial dimensions, and in three spatial dimensions, we have 
            \begin{align*}
                \| |\nabla|^{\beta} [(|\cdot|^{-1} \ast |w|^2)w]\|_{L_{T, x}^{\frac{10}{7}}} &\lesssim \begin{multlined}[t]
                    \|u\|_{L_T^\infty L_x^2} \| |\cdot|^{-1} \ast |\nabla|^\beta |u|^2\|_{L_T^{\frac{10}{7}} L_x^5} \\
                    +\| |\cdot|^{-1} \ast |u|^2 \|_{L_T^{\frac{10}{7}}L_x^5}\| |\nabla|^{\beta}u\|_{L_T^\infty L_x^2}.
                \end{multlined}\\
                &\lesssim T^{\frac{1}{2}}\|u\|_{L_T^\infty L_x^2} \|u\|_{A_3}\||\nabla|^{\beta}w\|_{L_T^\infty L_x^2}.
            \end{align*}
            Reinserting this into \eqref{E:persistenceintegral} and recalling the definition of $|J|^\beta$, we see that 
            \begin{equation}
                \| |J|^{\beta}u\|_{L_T^\infty L_x^2} \lesssim \|u_0\|_{H^{0, \beta}} + \frac{1}{2}\||J|^\beta u\|_{L_T^\infty L_x^2},
            \end{equation}
            where by the contraction mapping argument from earlier, we can choose $T$ depending only on $\|u_0\|_{L^2}$ to make the constant in front of $|J|^\beta u$ on the right-hand side equal to $\frac{1}{2}$. This implies that on the interval $[0, T]$ the norm of $|J|^\beta u$ grows by no more than a factor of $2$ in terms of the initial data. Again invoking mass conservation, we see that $\||J|^\beta u\|_{L^2}$ is finite along the global flow. \par
            Finally, we would like to obtain $L^\infty$ bounds for the solution. This follows from a virtually identical argument to \cite{ifrimGlobalBoundsCubic2014}; we reproduce it in our case for posterity. First, note (with $w$ defined the same way as earlier) that we have the identity
            \begin{equation}\label{E:jbetaidentity}
                \M(-t) |J|^\beta u = (2it)^\beta |\nabla|^\beta w.
            \end{equation}
            In particular, we see immediately that $w \in H^\beta$, and thus in $L^\infty$; to conclude, we use Gagliardo-Nirenberg:
            \begin{equation}\label{E:GNarg}
                \|u(t)\|_{L^\infty} = \|w(t)\|_{L^\infty} \lesssim \|w(t)\|_{L^2}^{1-\frac{d}{2\beta}}\||\nabla|^\beta w\|_{L^2}^{\frac{d}{2\beta}}. 
            \end{equation}
            The right-hand side is easily seen to be bounded by $t^{-\frac{d}{2}}\|u_0\|_{H^{0, \beta}}$ by rearranging \eqref{E:jbetaidentity} and using the properties of the solution $u$.
            
            To see that the same global existence argument holds for \eqref{E:SBP}, we note that all the same bounds hold for the convolution nonlinearity $(\K \ast |u|^2)u$, so that for the local existence argument we only need to handle the term $\| |u|^{\frac{2}{d}}u\|$ in the relevant spacetime norms. To this end, note that 
            \begin{equation*}
                \left \| \int_0^t e^{i(t-s)\Delta} |u|u \ds \right \|_{L_T^\infty L_x^2(\R^2)} \lesssim \| |u| u\|_{L_{T,x}^{\frac{4}{3}}} \lesssim T^{\frac{1}{2}}\|u\|_{L_T^\infty L_x^2} \|u\|_{A_2},
            \end{equation*}
            and 
            \begin{align*}
                \left \| \int_0^t e^{i(t-s)\Delta} |u|^{\frac{2}{3}}u \ds \right \|_{L_T^\infty L_x^2(\R^3)} &\lesssim \| |u|^{\frac{2}{3}} u\|_{L_t^{10/7}L_x^{10/7}} \\ 
                &\lesssim \| |u|^\frac{2}{3}\|_{L_t^\infty L_x^3} \| u \|_{L_t^{10/7} L_x^{30/11}} \\
                &\lesssim T^{\frac{1}{2}}\| u\|_{L_t^\infty L_x^{2}}^{\frac{2}{3}} \|u\|_{L_t^{5}L_x^{30/11}},
            \end{align*}
            which is acceptable, in view of the definitions above. Rerunning the same persistence of regularity argument using the definition of $|J|^\beta$ from above establishes the desired result.
        \end{proof}
        \begin{definition}
            Let $\vartheta \in \Sw(\R^d)$ have total integral $1$. Then a wave packet with velocity $v$ is defined to be 
            \begin{equation*}
                \Psi_{v}(t, x) := \vartheta\left(\frac{x -2tv}{t^{\frac{1}{2}}}\right)e^{i\frac{|x|^2}{4t}}.
            \end{equation*}
        \end{definition}
        The reason for the rescaling $t^{-\frac{1}{2}}$ of the argument of $\vartheta$ is so that the wavepacket lives at the scale dictated by the Heisenberg uncertainty principle. \par
        In fact, for any $v \in \R^d$, $\Psi_v(t,x)$ is an approximate solution to the linear Schr\"odinger equation in the following sense:
        \begin{lemma}\label{L:WPsol}
            In $d$ dimensions, we have for any $v \in \R^d$
            \begin{equation}\label{E:approxsoln}
                (i\partial_t + \Delta)\Psi_v(t, x) = \frac{1}{2t}e^{i\frac{|x|^2}{4t}} \nabla \cdot \bigg\{
                \begin{multlined}[t]
                    i(x-2vt)\vartheta\left(\frac{x-2vt}{t^\frac{1}{2}}\right) \\+ 2t^{\frac{1}{2}}\nabla\vartheta\left(\frac{x-2vt}{t^\frac{1}{2}}\right)\bigg\}
                \end{multlined}
            \end{equation}
        \end{lemma}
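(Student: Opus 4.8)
The plan is to reduce the statement to a direct computation by peeling off the quadratic phase. Write $\Psi_v(t,x) = \M(t)g(t,x)$, where $\M(t)$ is multiplication by $e^{i|x|^2/(4t)}$ as in \eqref{E:MDFM}, and $g(t,x) := \vartheta\big(t^{-1/2}(x-2tv)\big)$. First I would record the general conjugation identity for the Schr\"odinger operator acting on a function of the form $e^{i\phi}g$ with $\phi = |x|^2/(4t)$: expanding $(i\partial_t + \Delta)(e^{i\phi}g)$ by the product rule produces, besides the terms $i\partial_t g + 2i\nabla\phi\cdot\nabla g + i(\Delta\phi)g + \Delta g$, a contribution $(-\partial_t\phi - |\nabla\phi|^2)g$ which vanishes identically since $\partial_t\phi = -|x|^2/(4t^2) = -|\nabla\phi|^2$ (the usual pseudo-conformal cancellation). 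Substituting $\nabla\phi = x/(2t)$ and $\Delta\phi = d/(2t)$ then gives
\[
  (i\partial_t+\Delta)\Psi_v = e^{i|x|^2/(4t)}\Big[\, i\partial_t g + \tfrac{i}{t}\,x\cdot\nabla g + \tfrac{id}{2t}\,g + \Delta g \,\Big].
\]

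Next I would evaluate the bracket by the chain rule. Setting $y := t^{-1/2}(x-2tv)$, one has $\nabla_x g = t^{-1/2}(\nabla\vartheta)(y)$, $\Delta_x g = t^{-1}(\Delta\vartheta)(y)$, and $\partial_t g = (\nabla\vartheta)(y)\cdot\big(-\tfrac12 t^{-3/2}x - t^{-1/2}v\big)$. The key simplification is that the two first-order terms combine:
\[
  i\partial_t g + \tfrac{i}{t}\,x\cdot\nabla g = i(\nabla\vartheta)(y)\cdot\big(\tfrac12 t^{-3/2}x - t^{-1/2}v\big) = i(\nabla\vartheta)(y)\cdot\big(\tfrac12 t^{-3/2}(x-2tv)\big) = \tfrac{i}{2t}\,y\cdot(\nabla\vartheta)(y),
\]
using $t^{-3/2}(x-2tv) = t^{-1}y$. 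Hence the bracket reduces to $\tfrac{i}{2t}\,y\cdot(\nabla\vartheta)(y) + \tfrac{id}{2t}\vartheta(y) + \tfrac1t(\Delta\vartheta)(y)$.

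Finally I would expand the divergence on the right-hand side of \eqref{E:approxsoln} and check that it matches: for the vector field $F := i(x-2vt)\vartheta(y) + 2t^{1/2}(\nabla\vartheta)(y)$, using $\partial_{x_j}y_k = t^{-1/2}\delta_{jk}$ and $t^{-1/2}(x-2vt) = y$ one gets $\nabla_x\cdot[i(x-2vt)\vartheta(y)] = id\,\vartheta(y) + i\,y\cdot(\nabla\vartheta)(y)$ and $\nabla_x\cdot[2t^{1/2}(\nabla\vartheta)(y)] = 2(\Delta\vartheta)(y)$, so that $\tfrac{1}{2t}e^{i|x|^2/(4t)}\nabla_x\cdot F$ equals exactly the expression obtained in the previous step. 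There is no genuine obstacle here beyond careful bookkeeping; the one point demanding attention is to distinguish derivatives in $x$ from derivatives of $\vartheta$ evaluated at $y$, and to track the powers of $t$ through the chain rule so that the $x$-dependent first-order terms collapse into the factor $\tfrac{1}{2t}\,y$ as above.
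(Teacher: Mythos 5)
Your proof is correct and amounts to the same direct computation as the paper's, just organized more cleanly: you peel off the quadratic phase via $\Psi_v = \M(t)g$ and exploit the pseudo-conformal cancellation $\partial_t\phi + |\nabla\phi|^2 = 0$ at the outset, whereas the paper expands $i\partial_t\Psi_v$ and $\Delta\Psi_v$ separately and lets the $|x|^2/(4t^2)$ terms cancel term by term. You also explicitly confirm that the simplified bracket matches the divergence form on the right-hand side of \eqref{E:approxsoln}, a final check the paper leaves implicit.
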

        \begin{proof}
            This follows by a direct (albeit involved) computation. To wit, we have 
            \begin{equation*}
                i\partial_t \Psi_v(t, x) = \frac{|x|^2}{4t^2}e^{i\frac{|x|^2}{4t}} \vartheta\left(\frac{x -2tv}{t^{\frac{1}{2}}}\right) + i\left(-\frac{1}{2}xt^{-\frac{3}{2}} - \frac{2vt^{-\frac{1}{2}}}{2}\right)\cdot (\nabla\vartheta)\left(\frac{x -2tv}{t^{\frac{1}{2}}}\right)e^{i\frac{|x|^2}{4t}}.
            \end{equation*}
            Next, we need to handle the spatial derivatives using 
            \[
                \Delta(fg) = f \Delta g + 2(\nabla f) \cdot (\nabla g) + g \Delta f.    
            \]
            Direct computation yields 
            \begin{equation*}
                \nabla e^{i\frac{|x|^2}{4t}} = \frac{ix}{2t}e^{i\frac{|x|^2}{4t}}
            \end{equation*}
            and 
            \begin{equation*}
                \nabla \vartheta\left(\frac{x -2tv}{t^{\frac{1}{2}}}\right) = \frac{1}{t^\frac{1}{2}}(\nabla\vartheta)\left(\frac{x -2tv}{t^{\frac{1}{2}}}\right). 
            \end{equation*}
            To compute the second derivatives, we use the product rule for gradients and divergences. This yields
            \begin{equation}
                \Delta e^{i\frac{|x|^2}{4t}} = \frac{id}{2t}e^{i\frac{|x|^2}{4t}} - \frac{|x|^2}{4t^2} e^{i \frac{|x|^2}{4t}}
            \end{equation}
            and 
            \begin{equation}
                \Delta \vartheta\left(\frac{x-2tv}{t^\frac{1}{2}}\right) = \frac{1}{t} (\Delta \vartheta)\left(\frac{x-2tv}{t^\frac{1}{2}}\right).
            \end{equation}
            Putting everything together, we have
            \begin{equation}\label{E:wavepacketpde1}
                \begin{aligned}
                    (i\partial_t + \Delta) \Psi_v (t, x) &= 
                    \frac{|x|^2}{4t^2}e^{i\frac{|x|^2}{4t}} \vartheta\left(\frac{x -2tv}{t^{\frac{1}{2}}}\right) \\
                    &+ i\left(-\frac{1}{2}xt^{-\frac{3}{2}} - \frac{2vt^{-\frac{1}{2}}}{2}\right)\cdot (\nabla\vartheta)\left(\frac{x -2tv}{t^{\frac{1}{2}}}\right)e^{i\frac{|x|^2}{4t}} \\
                    &+ \left(\frac{id}{2t}e^{i\frac{|x|^2}{4t}} - \frac{|x|^2}{4t^2} e^{i \frac{|x|^2}{4t}}\right)\vartheta\left(\frac{x -2tv}{t^{\frac{1}{2}}}\right) \\ 
                    &+ \frac{ix}{t^\frac{3}{2}} e^{i\frac{|x|^2}{4t}} \cdot (\nabla \vartheta)\left(\frac{x -2tv}{t^{\frac{1}{2}}}\right) \\
                    &+ \frac{1}{t} (\Delta\vartheta)\left(\frac{x -2tv}{t^{\frac{1}{2}}}\right) e^{i \frac{|x|^2}{4t}},
                \end{aligned}
            \end{equation}
            where the first line contains the time derivative and the second contains the space derivatives. One may simplify this expression somewhat with some simple algebra: 
            \begin{equation}
                \mathrm{RHS}\eqref{E:wavepacketpde1} =
                \begin{multlined}[t]
                    i\left(\frac{x-2vt}{2t^{\frac{3}{2}}}\right)\cdot (\nabla\vartheta)\left(\frac{x -2tv}{t^{\frac{1}{2}}}\right)e^{i\frac{|x|^2}{4t}} \\+\frac{id}{2t}e^{i\frac{|x|^2}{4t}}\vartheta\left(\frac{x -2tv}{t^{\frac{1}{2}}}\right) 
                    + \frac{1}{t}(\Delta \vartheta) \left(\frac{x-2vt}{t^\frac{1}{2}}\right)e^{i\frac{|x|^2}{4t}}.
                \end{multlined} 
            \end{equation}
        \end{proof}
        The point of this lemma is that the piece of \eqref{E:approxsoln} in brackets has exactly the same localization properties as $\vartheta$, but decays a factor of $\tfrac{1}{t}$ better than the original wavepacket. \par
        Next, define 
        \begin{equation*}
            \gamma(t, v) = \int_{\R^d} u(t, x) \bbar{\Psi_v}(t, x) \dx.
        \end{equation*}
        This quantity measures the decay of the solution $u$ along the ray $\Gamma_v$. Using Plancherel's theorem we can rewrite this expression as 
        \begin{equation*}
            \gamma(t, v) = \int_{\R^d} \hat{u}(t, \xi) \bbar{\hat{\Psi_v}}(t, \xi) \dxi.
        \end{equation*}
        One can compute the Fourier transform of $\Psi_v$ explicitly. Indeed, we have
        \begin{align*}
            \hat{\Psi_v}(t, \xi) &= (2\pi)^{-\frac{d}{2}}\int_{\R^d} e^{-ix \cdot \xi} e^{i\frac{|x|^2}{4t}} \vartheta\left(t^{-\frac{1}{2}}(x-2vt)\right)\dx \\
            &= (2\pi)^{-\frac{d}{2}}\int_{\R^d} e^{-i(x-2vt+2vt)\cdot\xi} e^{i \frac{|x-2vt+2vt|^2}{4t}}\vartheta(t^{-\frac{1}{2}}(x-2vt))\dx\\
            &=
                (2\pi)^{-\frac{d}{2}}e^{-2ivt\cdot \xi}\int_{\R^d}e^{-i(x-2vt)\cdot\xi} e^{i\frac{|x-2vt|^2}{4t}+i(x-2vt)\cdot v+ it|v|^2}\vartheta(t^{-\frac{1}{2}}(x-2vt))\dx
            \\
            &= 
                (2\pi)^{-\frac{d}{2}}e^{-it|\xi|^2}e^{it|\xi - v|^2} \int_{\R^d}e^{-i(x-2vt)\cdot(\xi - v)}e^{i\frac{|x-2vt|^2}{4t}}\vartheta(t^{-\frac{1}{2}}(x-2vt))\dx
            \\
            &= t^{\frac{d}{2}}e^{-it|\xi|^2} \tilde{\vartheta}(t^{\frac{1}{2}}(\xi - v)),
        \end{align*}
        where the function $\tilde{\vartheta}(\xi)$ is given by 
        \begin{equation*}
            \tilde{\vartheta}(\xi) = e^{i|\xi|^2}\F\left[e^{i\frac{|x|^2}{4}}\vartheta(x)\right](\xi).
        \end{equation*}
        Direct computation shows that in fact 
        \begin{equation*}
            \int_{\R^d} \tilde{\vartheta}(\xi) \dxi = \int_{\R^d} \vartheta(x) \dx =1. 
        \end{equation*}
        If we remember where we started, we had 
        \begin{align*}
            \gamma(t, v) &= \int_{\R^d} u(t, x) \bbar{\Psi_v}(t,x) \dx \\
            &= \int_{\R^d}t^{\frac{d}{2}}\hat{u}(t, \xi) \bbar{e^{-it|\xi|^2} \tilde{\vartheta}(t^{\frac{1}{2}}(\xi - v))}\dxi.
        \end{align*}
        Using the fact that the complex conjugate in the definition of $\gamma$ introduces a minus sign in the argument of $\hat{\Psi_v}$, we see  
        \begin{equation}\label{E:gammafreqrepn}
            \gamma(t, \xi) = e^{it|\xi|^2} \hat{u}(t, \xi) \ast_{\xi} t^{\frac{d}{2}}\tilde{\vartheta}(t^{\frac{1}{2}}\xi).
        \end{equation}
        We'd now like to compare $\gamma(t, v)$ to a solution $u(t, x)$ of \eqref{E:VNLS} along a ray $\Gamma_v$. Since we'll need it, note that by direct computation, we have the following equality in the sense of Fourier multipliers: 
        \begin{equation}
            |\nabla_v|^s = (2t)^s |\nabla_x|^s \qtq{for all} s \in \R.
        \end{equation}
        where we recall that $v = \tfrac{x}{2t}$ (in particular, the units are consistent across the equals sign). With these definitions in hand, we can state the lemma.
        \begin{lemma}\label{L:gammabounds}
            Set $\beta = \frac{d}{2}+\frac{1}{10}$. Then the function $\gamma(t, v)$ satisfies the bounds
            \begin{equation}
                \|\gamma\|_{L^\infty} \lesssim t^{\frac{d}{2}}\|u\|_{L^\infty}, \quad \|\gamma\|_{L_v^2} \lesssim \|u\|_{L_x^2},\quad \| |\nabla_v|^\beta \gamma\|_{L_v^2} \lesssim\| |J|^{\beta} u\|_{L_x^2}.
            \end{equation}
            We also have the physical space bounds
            \[
                |u(t, 2vt) - t^{-\frac{d}{2}} e^{i\frac{|x|^2}{4t}} \gamma(t, v)| \lesssim t^{-\frac{d}{2}-\frac{1}{10}} \| |J|^{\beta} u\|_{L_x^2}
            \]  
            and the Fourier space bounds 
            \[
                |\hat{u}(t, \xi) - e^{-it|\xi|^2} \gamma(t, \xi)| \lesssim t^{-\frac{d}{4}-\frac{1}{10}} \| |J|^{\beta}u\|_{L_x^2}.
            \]
        \end{lemma}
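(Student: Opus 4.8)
The plan is to handle the three norm bounds by elementary means and the two pointwise bounds by a mollification argument, both organised around the frequency-side identity \eqref{E:gammafreqrepn}. Writing $g(t,\xi):=e^{it|\xi|^2}\hat u(t,\xi)=\F[e^{-it\Delta}u](\xi)$ for the Fourier-side profile, \eqref{E:gammafreqrepn} exhibits $\gamma(t,\cdot)$ as the mollification $g(t,\cdot)\ast\Theta_t$, where $\Theta_t:=t^{d/2}\tilde\vartheta(t^{1/2}\,\cdot\,)$ is an $L^1$-normalized Schwartz bump concentrated at scale $t^{-1/2}$. Two facts get used repeatedly: $\|\Theta_t\|_{L^1}=\|\tilde\vartheta\|_{L^1}\sim 1$, and the Plancherel identity $\||\nabla_\xi|^\beta g\|_{L^2}=\||x|^\beta e^{-it\Delta}u\|_{L^2}=\||J|^\beta u\|_{L^2}$ (the Fourier transform intertwines multiplication by $|x|^\beta$ with $|\nabla_\xi|^\beta$), together with $\|g\|_{L^2}=\|u\|_{L^2}$.

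For the norm bounds: the $L^\infty$ estimate is immediate from the physical-space definition, since $|\Psi_v(t,x)|=|\vartheta(t^{-1/2}(x-2tv))|$ has $L^1_x$-mass $t^{d/2}\|\vartheta\|_{L^1}$, whence $|\gamma(t,v)|\le\|u\|_{L^\infty}\|\Psi_v\|_{L^1_x}\lesssim t^{d/2}\|u\|_{L^\infty}$. The $L^2_v$ and $\dot H^\beta_v$ bounds then follow from Young's inequality applied to $\gamma=g\ast\Theta_t$: since $|\nabla_v|^\beta$ commutes with convolution, $\|\gamma\|_{L^2_v}\lesssim\|\Theta_t\|_{L^1}\|g\|_{L^2}\lesssim\|u\|_{L^2}$ and $\||\nabla_v|^\beta\gamma\|_{L^2_v}\lesssim\|\Theta_t\|_{L^1}\||\nabla_\xi|^\beta g\|_{L^2}\lesssim\||J|^\beta u\|_{L^2}$.

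For the two pointwise bounds I would first reduce each to a mollification error. Using the factorization \eqref{E:MDFM}, $u=\M(t)\cD(t)\F\M(t)(e^{-it\Delta}u)=\M(t)\cD(t)G$ with $G:=\F[\M(t)e^{-it\Delta}u]$, and performing the substitution $x=2tv$ both here and in the defining integral for $\gamma$, one computes $u(t,2tv)=e^{it|v|^2}(2it)^{-d/2}G(t,v)$, while the comparison term $t^{-d/2}e^{i|x|^2/4t}\gamma(t,v)$ (with $x=2tv$) equals $e^{it|v|^2}(2it)^{-d/2}(G\ast\widetilde\Theta_t)(v)$, where $\widetilde\Theta_t$ is again an $L^1$-normalized bump at scale $t^{-1/2}$ built from $\vartheta$; hence the physical-space difference equals $(2t)^{-d/2}\,\bigl|G(t,v)-(G\ast\widetilde\Theta_t)(v)\bigr|$. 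On the Fourier side, \eqref{E:gammafreqrepn} gives directly $\hat u(t,\xi)-e^{-it|\xi|^2}\gamma(t,\xi)=e^{-it|\xi|^2}\bigl(g(t,\xi)-(g\ast\Theta_t)(\xi)\bigr)$. In both cases the problem is reduced to bounding $\|h-h\ast(\text{bump at scale }t^{-1/2})\|_{L^\infty}$ for $h\in\{G,g\}$, and both $G$ and $g$ satisfy $\|h\|_{L^2}=\|u\|_{L^2}$ and $\|h\|_{\dot H^\beta}=\||J|^\beta u\|_{L^2}$ by the same Plancherel identity. This I estimate on the Fourier side: $\F[h-h\ast\Theta_t](\eta)=\hat h(\eta)\bigl(1-m(t^{-1/2}\eta)\bigr)$ for a Schwartz symbol $m$ with $m(0)=1$, so that $|1-m(t^{-1/2}\eta)|\lesssim\min(1,t^{-1/2}|\eta|)$; then $\|h-h\ast\Theta_t\|_{L^\infty}\le(2\pi)^{-d/2}\int|\hat h(\eta)|\,|1-m(t^{-1/2}\eta)|\,\d\eta$, which I split at the threshold $|\eta|\sim t^{1/2}$, pairing $\hat h$ against the weight $|\eta|^\beta$ by Cauchy--Schwarz on both pieces — this is exactly where $\|h\|_{\dot H^\beta}=\||J|^\beta u\|_{L^2}$ enters, and the condition $\beta>\tfrac d2$ makes both $\eta$-integrals converge and produces a negative power of $t$. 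Combined with the $(2t)^{-d/2}$ prefactor in the physical case, this yields the two stated inequalities.

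The step I expect to be the main obstacle is extracting the sharp power of $t$ from this last mollification estimate. The naive route — the Sobolev--Hölder embedding $\dot H^\beta\hookrightarrow\dot C^{\,\beta-d/2}$ evaluated at displacement $t^{-1/2}$ — only produces a gain of $t^{-(\beta-d/2)/2}$, which falls short of the claimed rates; reaching them requires a careful two-regime frequency decomposition of $\hat h$ at $|\eta|\sim t^{1/2}$ (rather than a global interpolation), the improved cancellation $|1-m(t^{-1/2}\eta)|\lesssim\min(1,t^{-1}|\eta|^2)$ available when $\vartheta$ is taken with vanishing first moments, and — for the Fourier estimate — an argument specific to $g=\F[e^{-it\Delta}u]$ being the transform of a profile in $H^{0,\beta}$. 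The value $\beta=\tfrac d2+\tfrac1{10}$ is calibrated precisely to this balance; all remaining ingredients — Young's inequality, Plancherel, and the $\M\cD\F\M$ factorization already recorded in \Cref{S:notation} — are routine.
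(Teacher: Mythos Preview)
Your reduction of both pointwise bounds to a mollification error for a function in $\dot H^\beta$, together with the Young's-inequality treatment of the three norm bounds, is exactly the paper's argument. The paper works on the physical side via the representation $t^{-d/2}\gamma(t,v)=w(t,2tv)\ast_v 2^dt^{d/2}\vartheta(2t^{1/2}v)$ with $w=\M(-t)u$, and then applies the homogeneous Sobolev embedding $\dot H^\beta\hookrightarrow\dot C^{\,\beta-d/2}$ to the difference under the convolution integral; your frequency-side splitting at $|\eta|\sim t^{1/2}$ is an equivalent bookkeeping of the same estimate and yields the same power of $t$.

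Where you go astray is in your last paragraph. The ``naive'' Sobolev--H\"older route does \emph{not} fall short: it is precisely what the paper does, and it produces the exponents $t^{-\beta/2-d/4}=t^{-d/2-1/20}$ (physical side) and $t^{d/4-\beta/2}=t^{-1/20}$ (Fourier side). These are the rates the paper's proof actually establishes and the rates used downstream (see the estimate of $\cR_3$ and the bootstrap closure); the exponents $-\tfrac{d}{2}-\tfrac1{10}$ and $-\tfrac{d}{4}-\tfrac1{10}$ appearing in the lemma's display do not match what the proof delivers. No vanishing-moment hypothesis on $\vartheta$, no two-regime refinement, and no argument ``specific to $g$'' is invoked anywhere. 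So discard the anticipated obstacle entirely: the mollification estimate you already wrote down, with the gain $t^{-(\beta-d/2)/2}$, is the complete argument.
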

        \begin{proof}
            Let $w = e^{-i|x|^2/4t}u$. Then we can express $\gamma$ in terms of $w$ as a convolution with respect to the variable $v$: 
            \begin{equation}\label{E:gammaconvrep}
                t^{-\frac{d}{2}}\gamma(t, v) = w(t, 2vt) \ast_v 2^d t^{\frac{d}{2}} \vartheta(2t^\frac{1}{2}v),
            \end{equation}
            noting that $v \mapsto 2^d t^{\frac{d}{2}}\vartheta((2t)^{\frac{1}{2}}v)$ has integral 1. By Young's inequality, then, we have the immediate convolution bounds 
            \begin{align*}
                \|\gamma(t, v)\|_{L^\infty} &\lesssim t^{\frac{d}{2}}\|w(t, 2vt)\|_{L^\infty} = t^{\frac{d}{2}}\|u\|_{L^\infty}, \\
                \|\gamma(t, v)\|_{L_v^2} &\lesssim t^{\frac{d}{2}}\|w(t, 2vt)\|_{L_v^2} \sim \|u\|_{L_x^2}.
            \end{align*}
            A straightforward estimate via Young's inequality and the commutativity of Fourier multipliers yields the third bound, \textit{viz}
            \begin{align*}
                \| |\nabla_v|^{\beta} \gamma(t, v)\|_{L_v^2} &\lesssim \| |\nabla_v|^{\beta} w(t, 2vt)\|_{L_v^2} \\
                &\lesssim (2t)^{\beta-d} \left\|\int_{\R^d} e^{i\frac{\eta}{2t}\cdot x}\left|\frac{\eta}{2t}\right|^\beta \tilde{w}\left(t,\frac{\eta}{2t}\right)\d\eta \right\|_{L_x^2} \\
                &\lesssim (2t)^{\beta} \left\|\int_{\R^d} e^{i \xi \cdot x} |\xi|^{\beta} \hat{w}(t, \xi) \dxi \right\|_{L_x^2} \\
                &\lesssim  \| |J|^{\beta}u\|_{L_x^2},  
            \end{align*}
            where to get from the second line to the third we used the fact that $v = \tfrac{x}{2t}$ implies that their Fourier dual variables satisfy $\eta = 2t\xi$. To get rid of the $(2t)^\beta$, we used the definition \eqref{E:Jtdefn1} to absorb it back into the $|\nabla_x|^{\beta}$. \par
            For the physical space bounds, we need to compare $u(t, 2vt)$ with $\gamma(t, v)$. To do this, note that 
            \begin{equation}
                |u(t, 2vt) - t^{-\frac{d}{2}}e^{i\frac{|x|^2}{4t}}\gamma(t, v)| = |w(t, 2vt) - t^{-\frac{d}{2}}\gamma(t, v)|. 
            \end{equation}
            We can then use the representation \eqref{E:gammaconvrep} to write 
            \begin{equation}
                |w(t, 2vt) -t^{-\frac{d}{2}}\gamma(t, v)| = \left| \int_{\R^d} w(t, 2(v-z)t) (2t^{\frac{1}{2}})^d\vartheta((2t)^{\frac{1}{2}}z) \d z - w(t, 2vt)\right|.
            \end{equation}
            Using that $\vartheta(z)$ has unit integral, we can again rewrite the above equation as 
            \begin{equation}\label{E:differencerepn}
                |w(t, 2vt) -t^{-\frac{d}{2}}\gamma(t, v)| = \left| \int_{\R^d} [w(t, 2(v-z)t) - w(t, 2vt)](2t^{\frac{1}{2}})^d\vartheta((2t)^{\frac{1}{2}}z) \d z\right|.
            \end{equation}
            Then using homogeneous Sobolev embedding, we find (using that $\tfrac{d}{2}< \beta < 1+\tfrac{d}{2}$), 
            \begin{equation}
                |w(t, 2t(v-z)) - w(t, 2vt)| \lesssim |z|^{\beta - \frac{d}{2}} \| |\nabla_v|^{\beta} w(t, 2vt)\|_{L_v^2}.
            \end{equation}
            This allows us to continue the estimate of \eqref{E:differencerepn} by 
            \begin{align*}
                |w(t, 2vt) - t^{-\frac{d}{2}}\gamma(t, v)| &\lesssim \int_{\R^d} |z|^{\beta - \frac{d}{2}} \| |\nabla_v|^{\beta} w(t, 2vt)\|_{L_v^2} (2t^{\frac{1}{2}})^d |\vartheta((2t)^{\frac{1}{2}}z)| \d z \\
                &\lesssim \| |\nabla_v|^\beta w(t, 2vt)\|_{L_v^2} (2t^{\frac{1}{2}})^{-(\beta-\frac{d}{2})} \\
                &\lesssim t^{-\frac{\beta}{2}-\frac{d}{4}} \| |J|^{\beta}u\|_{L_x^2}. 
            \end{align*}
             
            To obtain the frequency estimate, we recall the definition \eqref{E:gammafreqrepn} and that $\tilde{\vartheta}$ has unit integral to write 
            \begin{equation}
                |\hat{u}(t, \xi) - e^{-it |\xi|^2} \gamma(t, \xi)| \leq \int_{\R^d} |  e^{it |\xi-\eta|^2}\hat{u}(\xi - \eta) - e^{it|\xi|^2}\hat{u}(\xi)| (2t)^{\frac{d}{2}} |\tilde{\vartheta}((2t)^{\frac{1}{2}}\eta)| \d\eta.
            \end{equation}
            Applying the same Sobolev embedding argument from earlier, we can bound the difference in the integral above by 
            \begin{equation}
                |e^{it|\xi - \eta|^2}\hat{u}(\xi - \eta) - e^{it|\xi|^2} \hat{u}(\xi)| \lesssim |\eta|^{\beta - \frac{d}{2}} \| |\nabla_\xi|^\beta (e^{it |\xi|^2} \hat{u})\|_{L_\xi^2}.
            \end{equation}
            Substituting this into the integral and applying the same argument as for the physical-space case yields the estimate 
            \begin{equation}
                |\hat{u}(t, \xi) - e^{-it|\xi|^2} \gamma(t, \xi)| \lesssim t^{\frac{d}{4}- \frac{\beta}{2}} \| |J|^{\beta}u\|_{L_x^2},
            \end{equation}
            completing the proof.
        \end{proof}
        
        \begin{lemma}
            Let $u(t, x)$ be a solution to \eqref{E:VNLS}. Setting $\beta = \frac{d}{2}+\frac{1}{10}$, we have
            \begin{equation}\label{E:gammaODE}
                \partial_t \gamma(t, v) = \frac{i}{2t} (|\cdot|^{-1} \ast |\gamma(t, v)|^2)\gamma(t, v) + \cR(t, v),
            \end{equation}
            where the remainder $\cR(t,v)$ satisfies the estimate 
            \begin{multline}\label{E:remainderestimate}
                \|\cR(t, v)\|_{L^\infty} \lesssim t^{-\frac{3}{2}}\|\jbrak{J}^\beta u\|_{L^2}+ {t^{-\frac{d}{2}+\frac{1}{2}-\frac{1}{20d}}\|u\|_{L^\infty}^{1+\frac{1}{d}} \|u\|_{L^2}^{\frac{2(d-1)}{d}} \| |J|^\beta u\|_{L^2}^\frac{1}{d}} \\+t^{\frac{d}{2}-\frac{1}{2}-\frac{1}{20d}}\|u\|_{L^\infty}^{1+\frac{1}{d}}\|u\|_{L^2}^{\frac{2(d-1)}{d}} \| |J|^\beta u\|_{L^2}^{\frac{1}{d}}.
            \end{multline}
        \end{lemma}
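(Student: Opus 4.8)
\emph{Proof plan.} The plan is to differentiate $\gamma$ in $t$, substitute the equation, and integrate by parts so that $\Delta$ and $\partial_t$ land on the explicitly known wave packet. Writing $N(u):=(|\cdot|^{-1}\ast|u|^2)u$, so that $\partial_t u=i\Delta u-iN(u)$, two integrations by parts in $\int u\,\overline{\Delta\Psi_v}$ give
\begin{equation*}
\partial_t\gamma(t,v)=i\int_{\R^d}u\,\overline{\Delta\Psi_v}\dx+\int_{\R^d}u\,\overline{\partial_t\Psi_v}\dx-i\int_{\R^d}N(u)\,\overline{\Psi_v}\dx,
\end{equation*}
and, since $i\,\overline{\Delta\Psi_v}+\overline{\partial_t\Psi_v}=i\,\overline{(i\partial_t+\Delta)\Psi_v}$, the first two terms collapse to a pairing of $u$ against the wave packet error:
\begin{equation*}
\partial_t\gamma(t,v)=-i\int_{\R^d}N(u)\,\overline{\Psi_v}\dx+i\int_{\R^d}u\,\overline{(i\partial_t+\Delta)\Psi_v}\dx.
\end{equation*}
The first integral produces the cubic term of \eqref{E:gammaODE}; the second is the \emph{linear error} and supplies the $t^{-3/2}\jbrak{J}^\beta$ piece of $\cR$.

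For the cubic term, I would replace $u$ by its profile wherever it appears. By \Cref{L:gammabounds}, on the support $\{|x-2tv|\lesssim t^{1/2}\}$ of $\Psi_v$ one has $u(t,x)=t^{-d/2}e^{i|x|^2/4t}\gamma(t,v)+\bigo(t^{-d/2-1/20}\||J|^\beta u\|_{L^2})$, using both the physical-space bound of that lemma and the homogeneous Sobolev embedding $|\gamma(t,x/2t)-\gamma(t,v)|\lesssim|x/2t-v|^{\beta-d/2}\||\nabla_v|^\beta\gamma\|_{L^2}$ (valid because $\tfrac d2<\beta<1+\tfrac d2$). Substituting $t^{-d/2}e^{i|x|^2/4t}\gamma(t,v)$ into the outer factor of $N(u)\overline{\Psi_v}$, the phase cancels the $e^{-i|x|^2/4t}$ in $\overline{\Psi_v}$ and the normalization $\int\vartheta=1$ collapses the integral; substituting $t^{-d}|\gamma(t,y/2t)|^2$ for $|u(t,y)|^2$ inside the potential and changing variables $y=2ts$ turns the kernel (for $x$ near $2tv$) into $(2t)^{-1}|v-s|^{-1}$, so that $|\cdot|^{-1}\ast|u|^2$ becomes $\tfrac1t(|\cdot|^{-1}\ast|\gamma|^2)$ up to a constant absorbed into the convention of \Cref{R:convolution}. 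This yields the cubic term $\tfrac{i}{2t}(|\cdot|^{-1}\ast|\gamma|^2)\gamma$.

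The discrepancies above are controlled with \Cref{L:interplemma}: since $|\cdot|^{-1}\in L^{d,\infty}$, Hölder for Lorentz spaces gives $\||\cdot|^{-1}\ast g\|_{L^\infty}\lesssim\|g\|_{L^{d/(d-1),1}}\lesssim\|g\|_{L^1}^{(d-1)/d}\|g\|_{L^\infty}^{1/d}$. Taking $g$ to be the error in $|u|^2$ (of size $\lesssim\|u\|_{L^\infty}\,t^{-d/2-1/20}\||J|^\beta u\|_{L^2}$ pointwise and $\lesssim\|u\|_{L^2}^2$ in $L^1$ by mass conservation), and then pairing against $u\in L^\infty$ and $\Psi_v\in L^1$ (with $\|\Psi_v\|_{L^1}\sim t^{d/2}$), produces the term $t^{\frac d2-\frac12-\frac1{20d}}\|u\|_{L^\infty}^{1+\frac1d}\|u\|_{L^2}^{\frac{2(d-1)}d}\||J|^\beta u\|_{L^2}^{\frac1d}$ of \eqref{E:remainderestimate}; the error from replacing $\gamma(t,x/2t)$ by $\gamma(t,v)$, together with the $x\ne2tv$ shift inside the potential, yields the companion term $t^{-\frac d2+\frac12-\frac1{20d}}(\cdots)$, using $\|\gamma\|_{L^2}\lesssim\|u\|_{L^2}$ and $\|\gamma\|_{L^\infty}\lesssim t^{d/2}\|u\|_{L^\infty}$ from \Cref{L:gammabounds}. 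The analogous substitutions work verbatim for \eqref{E:SBP}, with $\K$ handled by \Cref{L:Kregularity} and the local term $|u|^{2/d}u$ treated by the same scheme.

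The linear error is where I expect the real difficulty to lie: the naive bound $\|u\|_{L^\infty}\|(i\partial_t+\Delta)\Psi_v\|_{L^1}$ only decays like $t^{-1}$, which is \emph{not} integrable near $t=\infty$, so one must genuinely exploit structure. I would insert the divergence form $(i\partial_t+\Delta)\Psi_v=\tfrac1{2t}e^{i|x|^2/4t}\nabla\cdot B$ from \Cref{L:WPsol} — where $B$ is localized at scale $t^{1/2}$ about $x=2tv$ with amplitude $\sim t^{1/2}$ — integrate by parts, and use $\nabla(e^{-i|x|^2/4t}u)=\tfrac1{2it}e^{-i|x|^2/4t}J(t)u$ from \eqref{E:Jtdefn1} to convert the stray gradient into a factor $t^{-1}$ times a $J$-operator (equivalently: $e^{-i|x|^2/4t}u$ is essentially constant over the packet and $\int(i\partial_t+\Delta)\Psi_v\dx=0$ by the divergence structure, so only the $\bigo(t^{-d/2-1/20})$ fluctuation from \Cref{L:gammabounds} survives). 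Estimating the resulting pairing by the duality between $\dot H^{\beta-1}$ and $\dot H^{-(\beta-1)}$ — which is precisely where the weight count $1<\beta<1+\tfrac d2$ is used, with $\|\nabla(e^{-i|x|^2/4t}u)\|_{\dot H^{\beta-1}}=(2t)^{-\beta}\||J|^\beta u\|_{L^2}$ and the $t^{1/2}$-scaling of $B$ controlling the dual norm — yields a bound of the form $t^{-1-\delta}\||J|^\beta u\|_{L^2}$ for some $\delta>0$ (in particular the first term of \eqref{E:remainderestimate}), which is integrable in time. Together with the bootstrap hypotheses $\|u\|_{L^\infty}\lesssim\eps t^{-d/2}$, $\|u\|_{L^2}\lesssim\eps$, $\||J|^\beta u\|_{L^2}\lesssim\eps t^{C\eps^{3-1/d}}$, this makes $\cR$ integrable, which is exactly what the closing bootstrap of \Cref{T:maintheorem} requires.
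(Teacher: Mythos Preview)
Your plan has the right architecture and uses the same key devices as the paper --- differentiating $\gamma$, the divergence identity of \Cref{L:WPsol}, and the Lorentz endpoint $\||\cdot|^{-1}\ast g\|_{L^\infty}\lesssim\|g\|_{L^{d/(d-1),1}}$ together with \Cref{L:interplemma} --- but your execution diverges from the paper's in two places worth noting.

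For the cubic term, the paper does \emph{not} replace the outer factor of $u$ by its profile. Instead it first freezes the potential at the ray, writing
\[
  -i\!\int N(u)\,\overline{\Psi_v}\,\dx
   \;=\; -i\,\gamma(t,v)\,\bigl(|\cdot|^{-1}\!\ast\!|u|^2\bigr)(2tv)+\cR_2,
\]
so that the factor $\gamma$ falls out \emph{exactly}, because $\int u\,\overline{\Psi_v}=\gamma$ by definition. The only nonlinear errors are then $\cR_2$ (the potential at $x$ versus at $2tv$, estimated via H\"older continuity of $w$ and the Lorentz bound) and $\cR_3$ (replacing $(|\cdot|^{-1}\!\ast\!|u|^2)(2tv)$ by $(2t)^{-1}(|\cdot|^{-1}\!\ast\!|\gamma|^2)(v)$, estimated via the physical-space bound of \Cref{L:gammabounds}). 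Your ordering --- replace the outer $u$ first, then the density, then the shift --- is also legitimate, but it introduces an additional error term carrying $\||J|^\beta u\|_{L^2}$ to the first power rather than the $1/d$ power, which does not match \eqref{E:remainderestimate} as stated (though it is still time-integrable under the bootstrap). The paper's ordering is the cleaner route to the precise form of the lemma.

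For the linear error $\cR_1$, the paper proceeds as you do up to the pairing of $e^{-i|x|^2/4t}Ju$ against the localized vector $B$, but then rewrites the integral as a convolution in $v$, places $Ju$ in $L^\infty$ via the dispersive estimate $\|Ju\|_{L^\infty}=\|e^{it\Delta}(xe^{-it\Delta}u)\|_{L^\infty}\lesssim t^{-d/2}\|xe^{-it\Delta}u\|_{L^1}$, and finishes with Cauchy--Schwarz to reach $\|\jbrak{J}^\beta u\|_{L^2}$; this is what produces the clean $t^{-3/2}$. Your $\dot H^{\beta-1}/\dot H^{-(\beta-1)}$ duality is a valid alternative, but a scaling check (the kernel $B$ has amplitude $t^{1/2}$ and lives at spatial scale $t^{1/2}$, so $\|B\|_{\dot H^{-(\beta-1)}}\sim t^{1/2+d/4+(\beta-1)/2}$) shows it only yields $t^{-1-1/20}\||J|^\beta u\|_{L^2}$. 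That is still integrable and suffices for the bootstrap, but it does not literally recover the first term of \eqref{E:remainderestimate}, so your parenthetical ``in particular the first term'' overstates what the duality argument gives.
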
    
        \begin{proof}
            Begin by taking the time derivative of $\gamma(t, v)$. By definition, this yields 
            \begin{align*}
                \partial_t \gamma(t, v) &= \int_{\R^d} \partial_t u \bbar{\Psi_v} + u \bbar{\partial_t \Psi_v} \dx \\
                &= \int_{\R^d} i \left[\Delta u - (|\cdot|^{-1} \ast |u|^2) u\right] \bbar{\Psi_v} + u \bbar{\partial_t \Psi_v} \dx \\
                &= i\int_{\R^d} u \bbar{\left[i \partial_t + \Delta\right]\Psi_v} \dx - i \int_{\R^d} (|\cdot|^{-1} \ast |u|^2)u \bbar{\Psi_v} \dx,
            \end{align*}
            where the integrations by parts to move the Laplace operator from the $u$ to the wavepacket $\Psi_v$ are justified using the decay of $\Psi_v$. Next, we use the computation \eqref{E:approxsoln} to write 
            \begin{equation}
                i \int_{\R^d} u \bbar{\left[i\partial_t + \Delta\right]\Psi_v} \dx = \frac{i}{2t} \int_{\R^d} u \M(-t) \nabla \cdot \left\{-i(x-2vt)\bbar{\vartheta} + 2t^{\frac{1}{2}}\bbar{\nabla\vartheta}\right\}\dx.
            \end{equation}
            We can then integrate by parts to rewrite the integral as 
            \begin{multline*}
                \frac{i}{2t} \int_{\R^d} u \M(-t) \nabla \cdot \left\{-it^{-\frac{1}{2}}(x-2vt)\bbar{\vartheta} + 2t^{\frac{1}{2}}\bbar{\nabla\vartheta}\right\}\dx \\= -\frac{i}{2t}\int_{\R^d} \nabla (\M(-t) u) \cdot \left(-i (x-2vt)\bbar{\vartheta} + 2t^{\frac{1}{2}} \bbar{\nabla \vartheta} \right)\dx.
            \end{multline*}
            We then use the definition of $J$ to rewrite this once more as 
                \begin{multline}
                    -\frac{i}{2t}\int_{\R^d} \nabla (e^{-i\frac{|x|^2}{4t}} u) \cdot \left(-i (x-2vt)\bbar{\vartheta} + 2t^{\frac{1}{2}} \bbar{\nabla \vartheta} \right)\dx \\= -\frac{1}{(2t)^2} \int_{\R^d} e^{-i\frac{|x|^2}{4t}} Ju \cdot \left(i (x-2vt)\bbar{\vartheta} -2t^\frac{1}{2}\bbar{\nabla \vartheta}\right) \dx.
                \end{multline}
            Combining this with the second term above, we can rewrite things in the following fashion: 
            \begin{align}
                \partial_t \gamma(t, v) &= -\frac{1}{(2t)^2}\int_{\R^d} \M(-t) Ju \cdot \{-i (x-2vt) \bbar{\vartheta} +2t^{\frac{1}{2}}\bbar{\nabla \vartheta}\} \dx \\
                &-i \int_{\R^d} u\bbar{\Psi_v} (|\cdot|^{-1} \ast (|u|^2 - |u(t, 2vt)|^2)) \dx \\
                &+ \begin{multlined}[t]
                    i \gamma |\cdot|^{-1} \ast |u(t, vt)|^2 - i \frac{1}{2t} (|\cdot|^{-1} \ast |\gamma(t, v)|^2 )\gamma \\+ i \frac{1}{2t}(|\cdot|^{-1} \ast |\gamma(t, v)|^2)\gamma
                \end{multlined} \\
                &\defe \frac{i}{2t}(|\cdot|^{-1} \ast |\gamma(t, v)|^2 )\gamma + \mathcal{R}_1 + \mathcal{R}_2 + \mathcal{R}_3,
            \end{align}
            where we define 
            \begin{equation}
                \cR_1 \defe -\frac{1}{(2t)^2}\int_{\R^d} \M(-t) Ju \cdot \{-i (x-2vt) \bbar{\vartheta} +2t^{\frac{1}{2}}\bbar{\nabla \vartheta}\} \dx,
            \end{equation}
            \begin{equation}
               \cR_2 \defe -i \int_{\R^d} u\bbar{\Psi_v} (|\cdot|^{-1} \ast (|u|^2 - |u(t, 2vt)|^2)) \dx, 
            \end{equation} 
            and 
            \begin{equation}
                \cR_3 \defe -\gamma(t, v)[|\cdot|^{-1} \ast (|u(t, 2vt)|^2 - \frac{1}{2t} |\gamma(t, v)|^2)],
            \end{equation}
            and where here and throughout we abide by the convention in \Cref{R:convolution}.
            We can now proceed to estimate each of the terms $\|\cR_j\|_{L^\infty}$. For $\cR_1$, we can change variables $x = 2tz$ to rewrite $\cR_1$ as a convolution: 
            \begin{align}
                \cR_1 &=
                \begin{multlined}[t]
                    -(2t)^{d-2}\int_{\R^d} \tilde{\M}(-t) Ju(2tz) \\\times \{-2it(z-v)\bbar{\vartheta(2t^{\frac{1}{2}}(z-v))} + 2t^{\frac{1}{2}} \bbar{\nabla \vartheta(2t^{\frac{1}{2}}(z-v))}\}\dz 
                \end{multlined}\\
                &= -\frac{1}{(2t)^2} \tilde{\M}(-t) (Ju)(2tz) \ast_v \left\{-2itz \vartheta(2t^{\frac{1}{2}}z) + 2t^{\frac{1}{2}} \bbar{\nabla \vartheta(2t^{\frac{1}{2}}z)}\right\}.
            \end{align}
            Here we write $\tilde{\M}(-t) = e^{-it|z|^2}$. Using Young's convolution inequality, we can place the first part of the convolution in $L^\infty$ and the second in $L^1$ to obtain 
            \begin{align}
                \|\cR_1\|_{L^\infty} &\lesssim t^{d-2} \|Ju\|_{L^\infty} t^{\frac{1}{2}-\frac d2} \\
                &\lesssim t^{\frac{d}{2}-\frac{3}{2}} \|e^{it\Delta} x e^{-it\Delta}u\|_{L^\infty}\\
                &\lesssim t^{- \frac{3}{2}}\|x e^{-it\Delta}u \|_{L^1} \\
                &\lesssim t^{-\frac{3}{2}}\|\jbrak{x}^\beta e^{-it\Delta}u \|_{L^2} \\
                &\lesssim t^{-\frac{3}{2}}\{\|u\|_{L^2} + \||J|^\beta u\|_{L^2}\}\\
                &\lesssim t^{-\frac{3}{2}}\|\jbrak{J}^\beta u\|_{L^2}
            \end{align}
            which is an acceptable estimate. 
            For $\cR_2$, we have 
            \begin{equation}
                \cR_2 = -i\int_{\R^d} u \bbar{\Psi}_v \left[|\cdot|^{-1} \ast(|u|^2(x) - |u|^2(2tv))\right] \dx.
            \end{equation}
            We thus have (using the same notation for $w(t,x)$ as above)
            \begin{align}
                |\cR_2| &\lesssim \|u\|_{L^\infty} \int_{\R^d} \bbar{\vartheta}\left(\frac{x-2tv}{t^{\frac{1}{2}}}\right)\left[|\cdot|^{-1} \ast (|w|^2(x) - |w|^2(2tv))\right] \dx \\
                &\lesssim \|u\|_{L^\infty} t^{\frac{d}{2}}\int_{\R^d} 2^d t^{\frac{d}{2}}\bbar{\vartheta}(2t^{\frac{1}{2}}z)\left[ |\cdot|^{-1} \ast (|w|^2(2t(v-z)) - |w|^2(2tv)) \right] \dz \\
                &\lesssim \|u\|_{L^\infty}t^\frac{d}{2}\int_{\R^d} 2^d t^{\frac{d}{2}} \bbar{\vartheta}(2t^{\frac{1}{2}}z) \left\| |\cdot|^{-1} \ast (|w|^2(2t(v-z)) - |w|^2(2tv))\right\|_{L_v^\infty} \dz. \label{E:R2Linftyest}
            \end{align}
            Now we need to estimate the convolution in $L^\infty$. To do this, we use \Cref{L:interplemma}, which yields
            \begin{equation*}
                \||\cdot|^{-1} \ast (|w|^2(2t(v-z)) - |w|^2(2tv))\|_{L_v^\infty}\lesssim \| |w|^2(2t(v-z)) - |w|^2(2tv)\|_{L^{\frac{d}{d-1},1}}.
            \end{equation*}
            The right-hand side of this equation is bounded by 
            \begin{equation*}
            | |w|^2(2t(v-z)) - |w|^2(2tv)\|_{L^{\frac{d}{d-1},1}} \lesssim \begin{multlined}[t]
                \|w(2t(v-z)) - w(2tv)\|_{L_v^2}^{\frac{d-1}{d}} \\ \times \|w(2t(v-z)) - w(2tv)\|_{L_v^\infty}^{\frac{1}{d}} \\
                   \times \| w(2t(v-z)) + w(2tv)\|_{L_v^2}^{\frac{d-1}{d}} \\ \times \|w(2t(v-z)) + w(2tv)\|_{L_v^\infty}^{\frac{1}{d}}.
            \end{multlined}
            \end{equation*}
            We bound each term individually. We crudely control the $L^2$ norms by 
            \begin{equation*}
                \|w(2t(v-z)) \pm w(2tv)\|_{L_v^2}^{\frac{d-1}{d}} \lesssim t^{-\frac{d-1}{2}}\|u\|_{L^2}^{\frac{d-1}{d}}
            \end{equation*}
            the $L^\infty$ norm of the sum by 
            \begin{equation*}
                \|w(2t(v-z)) + w(2tv) \|_{L_v^\infty}^{\frac{1}{d}} \lesssim \|u\|_{L^\infty}^{\frac{1}{d}},
            \end{equation*}
            and we use Sobolev embedding to control 
            \begin{equation*}
                \|w(2t(v-z)) - w(2tv)\|_{L_v^\infty}^{\frac{1}{d}} \lesssim |z|^{\frac{\beta}{d}-\frac{1}{2}}\||J|^\beta u\|_{L^2}^{\frac{1}{d}} t^{-\frac{1}{2}}.
            \end{equation*}
            Putting this all together, we see that we can estimate the right-hand side of  \eqref{E:R2Linftyest} by 
            \begin{align*}
                \mathrm{RHS}\eqref{E:R2Linftyest}&\lesssim \|u\|_{L^\infty}^{1+\frac{1}{d}} \|u\|_{L^2}^{\frac{2(d-1)}{d}} \| |J|^\beta u\|_{L^2}^\frac{1}{d}t^{\frac{d}{2}-d+1-\frac{1}{2}} \int_{\R^d} 2^d t^{\frac{d}{2}}\bbar{\vartheta}(2t^{\frac{1}{2}}z) |z|^{\frac{\beta}{d}-\frac{1}{2}} \dz \\
                &\lesssim \|u\|_{L^\infty}^{1+\frac{1}{d}} \|u\|_{L^2}^{\frac{2(d-1)}{d}} \| |J|^\beta u\|_{L^2}^\frac{1}{d} t^{-\frac{d}{2}+\frac{3}{4}-\frac{\beta}{2d}}
            \end{align*}
            which is an acceptable estimate.\par
            Finally, we need an estimate on $\cR_3$. We have 
            \begin{equation}
                \cR_3 = -\gamma(t, v) \left[|\cdot|^{-1} \ast \left(|u(t, 2vt)|^2 - \frac{1}{2t}|\gamma(t, v)|^2\right)\right].
            \end{equation}
            Now notice that we can write 
            \begin{align}
                |\cdot|^{-1} \ast |u(t, 2vt)|^2 &= \int_{\R^d} \frac{|u(t, y)|^2}{|2vt- y|} \dy \\
                &= (2t)^{d-1} \int_{\R^d} \frac{|u(t, 2tz)|^2}{|v - z|} \dz.
            \end{align}
            Using this, $\cR_3$ can be rewritten as 
            \begin{equation}
                \cR_3 = -(2t)^{d-1}\gamma(t, v) \int_{\R^d} \frac{|u(2tz)|^2 - (2t)^{-d} |\gamma(t,z)|^2}{|v-z|}\dz.
            \end{equation}
            Now using the $L^\infty$ endpoint of the Hardy-Littlewood-Sobolev inequality \cite[Theorem 2.6]{oneilConvolutionOperatorsSpaces1963}, we can estimate the integral by 
            \begin{align}
                \left|\int_{\R^d} \frac{|w(2tz)|^2 - (2t)^{-d}|\gamma(t, z)|^2}{|v-z|}\dz\right| &\lesssim \left\| |w(2tz)|^2 - (2t)^{-d}|\gamma(t, z)|^2\right\| _{L^{\frac{d}{d-1},1}} \\
                &\lesssim \begin{multlined}[t]
                    \left\|w(2tz) - (2t)^{-\frac{d}{2}}\gamma(t, z)\right\|_{L^{\frac{2d}{d-1},2}} \\\times\left\|w(2tz) + (2t)^{-\frac{d}{2}}\gamma(t, z)\right\|_{L^{\frac{2d}{d-1}, 2}}.
                \end{multlined}
            \end{align}
            We now need to estimate each of the Lorentz norms above. By \Cref{L:interplemma}, we find that 
            \begin{align}
                &\begin{multlined}
                    \left\| w(2tz) - (2t)^{-\frac{d}{2}}\gamma(t, z)\right\|_{L^{\frac{2d}{d-1}, 2}} \\\lesssim \|w(2tz) - (2t)^{-\frac{d}{2}}\gamma(t, z)\|_{L^2}^{\frac{d-1}{d}}\left\|w(2tz) - (2t)^{-\frac{d}{2}}\gamma(t, z)\right\|_{L^\infty}^{\frac{1}{d}}\label{E:minusterm}
                \end{multlined}
                \\
                &\begin{multlined}
                    \left\|w(2tz) + (2t)^{-\frac{d}{2}}\gamma(t, z)\right\|_{L^{\frac{2d}{d-1}, 2}} \\\lesssim \left\|w(2tz) + (2t)^{-\frac{d}{2}}\gamma(t, z)\right\|_{L^2}^{\frac{d-1}{d}}\left\|w(2tz) + (2t)^{-\frac{d}{2}}\gamma(t, z)\right\|_{L^\infty}^{\frac{1}{d}}.\label{E:plusterm}
                \end{multlined}
            \end{align}
            We now estimate each of the norms on the right-hand side individually. \par
            For the terms in \eqref{E:plusterm}, we use \Cref{L:gammabounds} and the triangle inequality: 
            \begin{align}
                &\|w(2tz)+(2t)^{-\frac{d}{2}}\gamma(t, z)\|_{L^2}^{\frac{d-1}{d}} \lesssim t^{-\frac{d-1}{2}}\|u\|_{L^2}^{\frac{d-1}{d}}, \\
                &\|w(2tz) + (2t)^{-\frac{d}{2}}\gamma(t, z)\|_{L^\infty}^{\frac{1}{d}} \lesssim \|u\|_{L^\infty}^{\frac{1}{d}}.
            \end{align}
            For the terms in \eqref{E:minusterm}, we crudely estimate the difference in $L^2$ by the sum, and apply the analysis from \eqref{E:plusterm}. For the $L^\infty$ term, though, we use the physical space estimate from \Cref{L:gammabounds}. This nets us the pair of estimates 
            \begin{align}
                &\|w(2tz) - (2t)^{-\frac{d}{2}}\gamma(t, z)\|_{L^2}^{\frac{d-1}{d}} \lesssim t^{-\frac{d-1}{2}}\|u\|_{L^2}^{\frac{d-1}{d}}, \\
                &\|w(2tz) - (2t)^{-\frac{d}{2}}\gamma(t, z)\|_{L^\infty}^{\frac{1}{d}} \lesssim t^{-\frac{1}{d}(\frac{\beta}{2}+\frac{d}{4})}\| |J|^\beta u\|_{L^2}^{\frac{1}{d}}.
            \end{align}
            Note that the physical space estimate can be reproven with an extra factor of $2$; all that changes is that in \eqref{E:differencerepn} we have a $2^{d/2}$ that we can freely replace with $2^d$, using the fact that $d > 0$.
            
            Putting all this information together, we establish the final estimate 
            \begin{equation}
                    \|\cR_3\|_{L^\infty} \lesssim t^{\frac{d}{2}-\frac{1}{d}(\frac{\beta}{2}+\frac{d}{4})}\|u\|_{L^2}^{\frac{2(d-1)}{d}} \|u\|_{L^\infty}^{1+\frac{1}{d}}\| |J|^\beta u\|_{L^2}^{\frac{1}{d}},
            \end{equation}
            which is acceptable. 
        \end{proof}
        A similar argument gives the following lemma in the case of \eqref{E:SBP}:
        \begin{lemma}\label{L:SBPgammaode}
            Let $u(t, x)$ be a solution to \eqref{E:SBP}. Then we have the following ODE for the associated wavepacket $\gamma$:
            \begin{equation}\label{E:SBPode}
                \partial_t \gamma(t, v) = -\frac{i}{2t}\left\{ (\K \ast |\gamma|^2)\gamma - |\gamma|^{\frac2d}\gamma\right\} + \cR(t, v), 
            \end{equation}
            where the remainder $\cR$ satisfies the estimate
            \begin{equation*}
                \|\cR(t, v)\|_{L^\infty} \lesssim\begin{multlined}[t]
                t^{-\frac{3}{2}}\|\jbrak{J}^\beta u\|_{L^2}+ {t^{-\frac{d}{2}+\frac{1}{2}-\frac{1}{20d}}\|u\|_{L^\infty}^{1+\frac{1}{d}} \|u\|_{L^2}^{\frac{2(d-1)}{d}} \| |J|^\beta u\|_{L^2}^\frac{1}{d}} \\+t^{\frac{d}{2}-\frac{1}{2}-\frac{1}{20d}}\|u\|_{L^\infty}^{1+\frac{1}{d}}\|u\|_{L^2}^{\frac{2(d-1)}{d}} \| |J|^\beta u\|_{L^2}^{\frac{1}{d}} + t^{\frac{d}{2} - \frac{1}{2}- \frac{1}{10d}}\|u\|_{L^\infty}^{1+\frac{1}{d}}\||J|^\beta u\|_{L^2}^{\frac{1}{d}}.
                \end{multlined} 
            \end{equation*}
        \end{lemma}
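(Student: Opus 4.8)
The plan is to retrace the proof of the previous lemma essentially line by line, so that the only genuinely new work is to account for the local nonlinearity $-|u|^{\frac2d}u$ in \eqref{E:SBP}. First I would differentiate $\gamma(t,v)=\int_{\R^d}u\,\bbar{\Psi_v}\dx$ in $t$, substitute \eqref{E:SBP} for $\partial_t u$, and integrate by parts (justified by the Schwartz decay of $\Psi_v$) to move the Laplacian onto the wavepacket, obtaining
\[
  \partial_t\gamma = i\int_{\R^d} u\,\bbar{(i\partial_t+\Delta)\Psi_v}\dx \;-\; i\int_{\R^d}(\K\ast|u|^2)u\,\bbar{\Psi_v}\dx \;+\; i\int_{\R^d}|u|^{\frac2d}u\,\bbar{\Psi_v}\dx .
\]
The first integral is treated exactly as before: insert \eqref{E:approxsoln} from \Cref{L:WPsol}, integrate by parts once more, rewrite through $J$, rescale to the $v$-variable and apply Young's inequality, producing the error $\cR_1$ with $\|\cR_1\|_{L^\infty}\lesssim t^{-\frac32}\|\jbrak{J}^\beta u\|_{L^2}$. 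For the convolution integral the only difference from the Hartree case is that $\K$ replaces $|\cdot|^{-1}$; since $\K\in L^{d,\infty}$ by \Cref{L:Kregularity}, the $L^\infty$ endpoint of Hardy--Littlewood--Sobolev and the Lorentz interpolation of \Cref{L:interplemma} apply verbatim, and the same three-step splitting — replace the convolution argument $x$ by $2tv$, factor out $\gamma$, then replace $u(t,2tv)$ by $t^{-\frac d2}\gamma$ — isolates the leading term $-\frac{i}{2t}(\K\ast|\gamma|^2)\gamma$ and produces errors $\cR_2,\cR_3$ of exactly the size appearing in the previous lemma. These three pieces give the first three terms of the claimed bound.

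The new ingredient is the last integral. Writing $u=\M(t)w$ the quadratic phases cancel, so $|u|^{\frac2d}u\,\bbar{\Psi_v}=|w|^{\frac2d}w\,\bbar{\vartheta(t^{-\frac12}(x-2tv))}$; changing variables $x=2tz$ (cf. \eqref{E:gammaconvrep}) one recognizes
\[
  i\int_{\R^d}|u|^{\frac2d}u\,\bbar{\Psi_v}\dx \;=\; i\,t^{\frac d2}\int_{\R^d}\big|w(t,2tz)\big|^{\frac2d}w(t,2tz)\,\mu_t(v-z)\dz ,
\]
where $\mu_t$ is a mollifier of unit mass concentrated at scale $t^{-\frac12}$. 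Peeling off the value at $z=v$ and then using the physical-space estimate of \Cref{L:gammabounds} to replace $w(t,2tv)$ by $t^{-\frac d2}\gamma(t,v)$ extracts the claimed leading term $\frac{i}{2t}|\gamma|^{\frac2d}\gamma$ and leaves two remainders: a mollification error, controlled by $\big|\,|w(2t(v-z))|^{\frac2d}w(2t(v-z))-|w(2tv)|^{\frac2d}w(2tv)\,\big|$ averaged against $\mu_t$, and a diagonal-replacement error, controlled by $\big|\,|w(2tv)|^{\frac2d}w(2tv)-|t^{-\frac d2}\gamma|^{\frac2d}(t^{-\frac d2}\gamma)\,\big|$. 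Both are handled with the elementary bound $\big||\zeta_1|^{\frac2d}\zeta_1-|\zeta_2|^{\frac2d}\zeta_2\big|\lesssim(|\zeta_1|+|\zeta_2|)^{\frac2d}|\zeta_1-\zeta_2|$, valid for $\zeta_1,\zeta_2\in\C$, $d\in\{2,3\}$: the prefactor $(|w|+t^{-\frac d2}|\gamma|)^{\frac2d}$ is bounded by $\|u\|_{L^\infty}^{\frac2d}$ via $\|\gamma\|_{L^\infty}\lesssim t^{\frac d2}\|u\|_{L^\infty}$, while for the diagonal error the remaining difference is exactly $|w(2tv)-t^{-\frac d2}\gamma(t,v)|\lesssim t^{-\frac d2-\frac1{10}}\||J|^\beta u\|_{L^2}$ by \Cref{L:gammabounds}, and for the mollification error it is $|w(2t(v-z))-w(2tv)|$, which I would control by homogeneous Sobolev embedding $\dot H^\beta_v\hookrightarrow\dot C^{\,\beta-\frac d2}_v$ (legitimate since $\frac d2<\beta<1+\frac d2$) keeping in mind $\||\nabla_v|^\beta w(t,2tv)\|_{L^2_v}\lesssim t^{-\frac d2}\||J|^\beta u\|_{L^2}$, then integrated against $\mu_t$. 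In each case one then interpolates the resulting bound against the crude estimate $\lesssim t^{\frac d2}\|u\|_{L^\infty}^{1+\frac2d}$ with weight $\frac1d$ — the same role played by \Cref{L:interplemma} in the $\cR_2,\cR_3$ estimates — which trades one copy of the regularity for time decay and yields precisely the extra term $t^{\frac d2-\frac12-\frac1{10d}}\|u\|_{L^\infty}^{1+\frac1d}\||J|^\beta u\|_{L^2}^{\frac1d}$, establishing \eqref{E:SBPode} together with its remainder estimate.

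The step I expect to be the main obstacle is exactly this last one. Because $|u|^{\frac2d}u$ is not a smooth function of $u$ (for $d=3$ it is only $C^{1,\frac23}$, with Hölder-$\frac23$ first derivative), one cannot Taylor-expand the nonlinearity; moreover a direct estimate of the mollification error yields only $t^{-\frac1{20}}$ decay, so the Hölder-type inequality, the sharp $L^\infty$ and physical-space bounds of \Cref{L:gammabounds}, and the interpolation must be combined with care so that exactly one unit of the available regularity is spent on the $t$-smallness. Keeping the powers of $t$ straight through the $\M$--$\cD$--$\F$ rescaling \eqref{E:MDFM}, and making sure the extracted coefficient really is $-\frac i{2t}\{(\K\ast|\gamma|^2)\gamma-|\gamma|^{\frac2d}\gamma\}$, is the delicate bookkeeping; the $\K\ast|u|^2$ contribution, by contrast, is entirely routine given \Cref{L:Kregularity}.
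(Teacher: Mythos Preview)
Your proposal is correct and tracks the paper's argument closely: the linear error $\cR_1$ and the convolution errors $\cR_2,\cR_3$ are handled exactly as in the Hartree lemma, using $\K\in L^{d,\infty}$ (equivalently the pointwise bound $\K\le|\cdot|^{-1}$) so that \Cref{L:interplemma} and the endpoint Hardy--Littlewood--Sobolev estimate carry over unchanged.

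The only substantive difference is in the treatment of the local term $|u|^{2/d}u$. The paper does not mollify the full nonlinearity $|w|^{2/d}w$; instead it factors $|u|^{2/d}u\,\bbar{\Psi_v}=u\bbar{\Psi_v}\cdot|u|^{2/d}$ and replaces only the scalar factor $|u|^{2/d}$ by $|u(2tv)|^{2/d}$, so that the surviving integral is exactly $\gamma$. The resulting error $\cR_A$ is then controlled through the concavity inequality $\bigl||a|^{1/d}-|b|^{1/d}\bigr|\le|a-b|^{1/d}$, which immediately puts the power $1/d$ on the Sobolev-controlled difference and produces the extra $t^{-1/2}$ from $(t^{-d/2})^{1/d}$. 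Your route---applying the Lipschitz bound $\bigl||\zeta_1|^{2/d}\zeta_1-|\zeta_2|^{2/d}\zeta_2\bigr|\lesssim(|\zeta_1|+|\zeta_2|)^{2/d}|\zeta_1-\zeta_2|$ and then ``interpolating'' with weight $1/d$---also works, since for any quantity bounded by both $A$ and $B$ one trivially has $X=X^{1/d}X^{1-1/d}\le A^{1/d}B^{1-1/d}$; this is what recovers the stated form of the bound. (In fact your direct estimate, \emph{before} interpolation, already yields an integrable remainder once the bootstrap is inserted, just in a different algebraic form.) The paper's concavity trick is marginally cleaner because it avoids this extra step and makes the appearance of the $1/d$-th power of $\||J|^\beta u\|_{L^2}$ transparent, but the two arguments are equivalent in strength.
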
 
        \begin{proof}
            This follows from the previous lemma and the pointwise bound $|\K(x)| \leq |x|^{-1}$, with additional terms coming from the power-type nonlinearity $|u|^{\frac{2}{d}}u$. These take the form
            \begin{align*}
                \cR_A(t, v) &= i \int_{\R^d} u\bbar{\Psi}_v (|u|^{\frac{2}{d}} - |u(2tv)|^{\frac{2}{d}})\dx \\
                \cR_B(t, v) &=  i \gamma \left(|u(2tv)|^{\frac{2}{d}} - \frac{i}{2t}|\gamma|^\frac{2}{d}\right).
            \end{align*}
            For the first term, we estimate in $L_v^\infty$ by 
            \begin{equation*}
                |\cR_A(t, v)| \lesssim \|u\|_{L^\infty}^{1+\frac{1}{d}} \int_{\R^d} \bbar{\vartheta}\left(\frac{x-2tv}{t^\frac{1}{2}}\right)(|w|^\frac{1}{d} - |w(2tv)|^\frac{1}{d}) \dx.
            \end{equation*}
            Using the change of variables $x = 2tz$, convexity of the map $t \mapsto t^{\frac{1}{d}}$ for $t \geq 0$ and $d \geq 1$, and the Sobolev embedding estimate from above, we get the bound 
            \begin{equation*}
                \|\cR_A(t, v)\|_{L_v^\infty} \lesssim t^{\frac{d}{2} - \frac{1}{2}- \frac{1}{10d}}\|u\|_{L^\infty}^{1+\frac{1}{d}}\||J|^\beta u\|_{L^2}^{\frac{1}{d}},
            \end{equation*}
            which is acceptable. For the term $\cR_B$, we have 
            \begin{equation*}
                |\cR_B(t, v)| \lesssim t^{\frac{d}{2}}\|u\|_{L^\infty}^{\frac{1}{d}}\left||u(2tv)|^\frac{1}{d}- t^{-\frac{1}{2}}|\gamma|^\frac{1}{d}\right|,
            \end{equation*}
            which by convexity and \Cref{L:gammabounds} implies that we have the bound 
            \begin{equation*}
                \|\cR_B(t, v)|\lesssim t^{\frac{d}{2} - \frac{1}{2}-\frac{1}{10d}} \|u\|_{L^\infty}^{1+\frac{1}{d}}\||J|^\beta u\|_{L^2}^\frac{1}{d},
            \end{equation*}
            which is also acceptable. 
        \end{proof}
    Now that we have derived an approximate ODE for the wavepacket $\gamma$ and bounds on the remainder terms, we want to prove the global bounds claimed in the statement of the main theorems. This statement is furnished by the following lemma: 
    \begin{lemma}
        Suppose that $0 < \eps \ll 1$ and suppose $\|u_0\|_{H^{0, \beta}} = \eps$. Let $u(t, x)$ be the corresponding global solution to \eqref{E:VNLS} furnished by \Cref{T:GWP}. Then the following bounds hold globally in time: 
        \begin{align*}
            \|u\|_{L^\infty} &\lesssim \eps |t|^{-\frac{d}{2}} \\
            \| \jbrak{J}^\beta u\|_{L^2} &\lesssim 2\eps \jbrak{t}^{\tilde{C}\eps^2},
        \end{align*}
        for some constant $\tilde{C} \ll \eps^{-\frac{2}{d}}$. We have a similar bound for \eqref{E:SBP}, but the bound on $\| \jbrak{J}^\beta u\|_{L^2}$ is replaced by 
        \begin{equation*}
            \| \jbrak{J}^\beta u\|_{L^2} \lesssim 2\eps \jbrak{t}^{\tilde{C}\eps^2 + (C\eps)^{\frac{1}{d}}},
        \end{equation*}
        for a constant $C \ll \eps^{-1}$ and $\tilde{C}$ as above.
    \end{lemma}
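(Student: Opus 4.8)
The plan is to run a continuity (bootstrap) argument for $t\ge 1$. For $t<0$ one appeals to the time–reversal symmetry $u(t,x)\mapsto\bbar{u(-t,x)}$, which sends solutions of \eqref{E:VNLS} (resp. \eqref{E:SBP}) to solutions of the same equation, and for $|t|\le 1$ the claimed bounds are already contained in \Cref{T:GWP} (whose contraction runs on a time interval $\gtrsim\eps^{-4}$, so in particular $\|\langle J\rangle^\beta u\|_{L^2}$ at most doubles on $[0,1]$). Thus at $t=1$ we have $\|u(1)\|_{L^\infty}\lesssim\eps$, $\|\langle J\rangle^\beta u(1)\|_{L^2}\lesssim\eps$, and by \Cref{L:gammabounds} also $\|\gamma(1,\cdot)\|_{L^\infty}\lesssim\eps$. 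I would fix a large absolute constant $C_\ast$ and make the bootstrap assumptions
\[
    \|u(t)\|_{L^\infty}\le C_\ast\eps\, t^{-\frac d2},\qquad \||J|^\beta u(t)\|_{L^2}\le C_\ast\eps\, t^{\,\tilde C\eps^2}
\]
on a maximal interval $[1,T^\ast)$ — with $\tilde C\eps^2$ replaced by $\tilde C\eps^2+(C\eps)^{1/d}$ in the case of \eqref{E:SBP} — where the constants $\tilde C$, $C$ will be chosen (as absolute constants) afterwards. Since $\|u(t)\|_{L^2}=\|u_0\|_{L^2}\le\eps$ by mass conservation and $\|\langle J\rangle^\beta u\|_{L^2}\sim\|u\|_{L^2}+\||J|^\beta u\|_{L^2}$, these two assumptions suffice, and the goal is to improve each $C_\ast$ to $\tfrac12 C_\ast$.

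For the energy bound I would use the exact identity $\partial_t\big(|J|^\beta u\big)=i\Delta\big(|J|^\beta u\big)-i\,|J|^\beta\!\big[(|\cdot|^{-1}\ast|u|^2)u\big]$, which follows from \eqref{E:Jtpower2} and \eqref{E:VNLS}; pairing with $|J|^\beta u$ annihilates the $i\Delta$ term and leaves $\frac{d}{dt}\||J|^\beta u\|_{L^2}\le\big\||J|^\beta[(|\cdot|^{-1}\ast|u|^2)u]\big\|_{L^2}$. Writing $w=\M(-t)u$ and using \eqref{E:Jtpower} together with $|w|=|u|$ pointwise, the right side equals $(2t)^\beta\||\nabla|^\beta[(|\cdot|^{-1}\ast|w|^2)w]\|_{L^2}$; a fractional Leibniz rule distributes $|\nabla|^\beta$ onto either the convolution factor or the remaining $w$, the nonlocal factors are bounded exactly as in the proof of \Cref{T:GWP} (Young/Hardy--Littlewood--Sobolev with $|\cdot|^{-1}\in L^{d,\infty}$ and \Cref{L:interplemma}), and one inserts $\|w\|_{L^2}\le\eps$ and the $L^\infty$ bootstrap. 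For $d\in\{2,3\}$ this produces a genuine gain of one full power of $t$,
\[
    \big\||J|^\beta[(|\cdot|^{-1}\ast|w|^2)w]\big\|_{L^2}\;\lesssim\; C_\ast^{\frac{2(d-1)}{d}}\eps^2\, t^{-1}\,\||J|^\beta u\|_{L^2},
\]
so Gr\"onwall yields $\||J|^\beta u(t)\|_{L^2}\le\||J|^\beta u(1)\|_{L^2}\,t^{\tilde C\eps^2}$ with $\tilde C\simeq C_\ast^{2(d-1)/d}$, improving the bootstrap once $\eps$ is small enough that $\tilde C\eps^{2/d}\ll 1$ (this is the meaning of $\tilde C\ll\eps^{-2/d}$). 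For \eqref{E:SBP} the term $\K\ast|u|^2$ is handled identically via \Cref{L:Kregularity}, while the power nonlinearity contributes $\||J|^\beta(|u|^{2/d}u)\|_{L^2}\lesssim\|u\|_{L^\infty}^{2/d}\||J|^\beta u\|_{L^2}\lesssim\eps^{2/d}t^{-1}\||J|^\beta u\|_{L^2}$ by the fractional chain rule for $|w|^{2/d}w$, which is admissible precisely because $\beta<1+\tfrac2d$ for $d\in\{2,3\}$ — this is the source of the extra $(C\eps)^{1/d}$ in the exponent.

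For the $L^\infty$ bound it suffices, by the physical-space estimate in \Cref{L:gammabounds}, to bound $\|\gamma(t,\cdot)\|_{L^\infty}$, since $\|u(t)\|_{L^\infty}\le t^{-d/2}\|\gamma(t)\|_{L^\infty}+C\,t^{-d/2-\frac1{10}}\||J|^\beta u\|_{L^2}$ and the already-proven energy bound makes the second term $\lesssim\eps\,t^{-d/2}\,t^{-1/10+\tilde C\eps^2}\le\eps\,t^{-d/2}$ for $t\ge 1$. The key structural point is that the leading term of \eqref{E:gammaODE} (resp. \eqref{E:SBPode}) is $\frac i{2t}$ times a \emph{real} multiple of $\gamma$, so $\partial_t|\gamma|^2=2\Re(\bbar\gamma\,\cR)$ and hence $\partial_t|\gamma|\le|\cR|$ pointwise in $v$; integrating, $|\gamma(t,v)|\le|\gamma(1,v)|+\int_1^t|\cR(s,v)|\,ds$, and feeding the bootstrap hypotheses into \eqref{E:remainderestimate} (resp. the remainder bound in \Cref{L:SBPgammaode}) makes every term of $\cR$ integrable on $[1,\infty)$ — each relevant power of $s$ is strictly below $-1$ once $\eps$ is small — with total mass $\lesssim\eps$. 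Hence $\|\gamma(t)\|_{L^\infty}\lesssim\eps$ with an absolute implied constant, which improves the $L^\infty$ bootstrap after the constants are ordered ($C_\ast$ first, then $\tilde C$, then $\eps$ small). A standard continuity argument then gives $T^\ast=\infty$, and combining with the $|t|\le1$ bounds from \Cref{T:GWP} finishes the lemma.

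I expect the main obstacle to be the energy estimate, specifically the verification that $|\nabla|^\beta$ applied to the nonlinearity costs only $t^{-1}$ rather than $t^{-1+\delta}$: any loss of a power of $t$ here would turn the $t^{\tilde C\eps^2}$ growth into $t^{\delta}$ growth and break the whole scheme. This is where the precise weight count $\beta=\tfrac d2+\tfrac1{10}$, the restriction $d\in\{2,3\}$, the endpoint convolution bound via $|\cdot|^{-1}\in L^{d,\infty}$ and \Cref{L:interplemma}, and (for \eqref{E:SBP}) the marginal validity of the fractional chain rule on $|u|^{2/d}u$ all have to conspire. By contrast, once \eqref{E:remainderestimate} and \Cref{L:gammabounds} are granted, the $\gamma$-ODE step is essentially bookkeeping, and the constant-ordering in the bootstrap is routine.
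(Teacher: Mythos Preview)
Your proposal is correct and follows essentially the same route as the paper: a bootstrap on $[1,\infty)$ with the small-time bounds supplied by \Cref{T:GWP}, an energy estimate on $\||J|^\beta u\|_{L^2}$ that gains exactly $t^{-1}$ from the nonlinearity and is closed by Gr\"onwall, and control of $\|\gamma\|_{L^\infty}$ via the integrability of $\cR$ in \eqref{E:remainderestimate}/\Cref{L:SBPgammaode} combined with the physical-space comparison in \Cref{L:gammabounds}. The only cosmetic differences are that the paper writes the energy step via Duhamel from $t=1$ rather than your differential identity (these are equivalent), and it packages the $|\gamma|$-control using the unimodular integrating factor $B(t)$ rather than your observation $\partial_t|\gamma|\le|\cR|$ --- again the same content, since both exploit that the leading term in the $\gamma$-ODE is $i$ times a real multiple of $\gamma$.
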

    \begin{proof}
        We will use a bootstrap argument, beginning under the assumption that 
        \begin{equation*}
            \|u(t)\|_{L^\infty} \leq C\eps|t|^{-\frac{d}{2}},
        \end{equation*}
        for some $C \ll \frac{1}{\eps}$.
        To prove the bound on $\| \jbrak{J}^\beta u\|_{L^2}$, we first note that \Cref{T:GWP} implies that 
        \begin{equation*}
            \| |J|^\beta u(1)\|_{L^2} < 2\eps.
        \end{equation*}
        For times $t > 1$, we use the Duhamel formula, started from time $t = 1$. This gives 
        \begin{equation*}
            u(t) = e^{i(t-1)\Delta}u(1) - i \int_1^t e^{i(t-s)\Delta}[(|\cdot|^{-1} \ast |u|^2)u]\ds,
        \end{equation*}
        in the Hartree case, or 
        \begin{equation*}
            u(t) = e^{i(t-1)\Delta}u(1) - i \int_1^t e^{i(t-s)\Delta}[(\K \ast |u|^2)u - |u|^\frac{2}{d}u] \ds 
        \end{equation*}
        in the case of Schr\"odinger-Bopp-Podolsky. In either case, applying $|J|^\beta$, estimating in $L^2$ and applying the triangle inequality, we see that we need to understand the following $3$ terms: 
        \begin{align}
            &\||\nabla|^\beta [(|\cdot|^{-1} \ast |u|^2)u]\|_{L^2}, \label{E:hartreederivs} \\
            & \||\nabla|^\beta [(\K \ast |u|^2)u]\|_{L^2},\label{E:Kderivs} \text{ and } \\
            &\| |\nabla|^\beta |u|^\frac{2}{d}u\|_{L^2} \label{E:powderivs}.
        \end{align}
        Note that the corresponding result for $|J|^\beta$ follows in the same way as in the persistence of regularity argument in \Cref{T:GWP}.

        For the first term, we see that by the fractional product rule, we have an estimate of the form
        \begin{equation*}
            \eqref{E:hartreederivs} \lesssim \| |\cdot|^{-1} \ast |\nabla|^\beta|u|^2\|_{L^p} \|u\|_{L^q} + \| |\cdot|^{-1} \ast |u|^2\|_
            {L^\infty} \| |\nabla|^\beta u\|_{L^2},
        \end{equation*}
        where $\frac{1}{p} + \frac{1}{q} = \frac{1}{2}$. By the endpoint Hardy-Littlewood-Sobolev inequality from above and \Cref{L:interplemma}, we have 
        \begin{equation*}
            \| |\cdot|^{-1} \ast |u|^2\|_{L^\infty} \|u\|_{L^2} \lesssim \|u\|_{L^2}^{\frac{2(d-1)}{d}} \|u\|_{L^\infty}^\frac{2}{d} \||\nabla|^\beta u\|_{L^2}.
        \end{equation*}
        For the first term, by applying the ordinary Hardy-Littlewood-Sobolev inequality and the fractional product rule, we conclude that 
        \begin{equation*}
            \| |\cdot|^{-1} \ast |\nabla|^\beta|u|^2\|_{L^p} \lesssim \|u\|_{L^q} \| |\nabla|^\beta u\|_{L^2} \|u\|_{L^r}, 
        \end{equation*}
        where we have 
        \begin{align*}
            \frac{1}{p} + \frac{d-1}{d} &= \frac{1}{2} + \frac{1}{r}, \text{ and } \\
            \frac{1}{p} + \frac{1}{q} &= \frac{1}{2}. 
        \end{align*}
        These conditions imply that both $q, r$ are larger than $2$, so that we can write the $L^q$ and $L^r$ norms as interpolants of the $L^2$ and $L^\infty$ norms, concluding that 
        \begin{equation*}
            \| |\cdot|^{-1} \ast |\nabla|^\beta|u|^2\|_{L^p} \lesssim \|u\|_{L^2}^{\frac{2(d-1)}{d}} \|u\|_{L^\infty}^\frac{2}{d} \||\nabla|^\beta u\|_{L^2},
        \end{equation*}
        which is acceptable. In light of the pointwise bound $\K(x) \leq \frac{1}{|x|}$, the estimates above apply to yield the same estimate for $\| |\nabla|^\beta (\K \ast |u|^2) u\|_{L^2}$.

        Finally, for the term $\| |\nabla|^\beta |u|^\frac{2}{d}u\|_{L^2}$, we apply the fractional chain rule to conclude that 
        \begin{equation*}
            \| |\nabla|^\beta |u|^\frac{2}{d} u\|_{L^2} \lesssim \| |\nabla|^\beta u\|_{L^2} \|u\|_{L^\infty}^\frac{2}{d},
        \end{equation*}
        which is also acceptable. 

        In total, we have the estimates 
        \begin{equation*}
            \| |J|^\beta u\|_{L^2} \lesssim 2\eps + \int_1^t \|u\|_{L^2}^{\frac{2(d-1)}{d}} \|u\|_{L^\infty}^\frac{2}{d} \||J|^\beta u\|_{L^2} \ds,
        \end{equation*}
        in the case of \eqref{E:VNLS}, and 
        \begin{equation*}
            \| |J|^\beta u\|_{L^2} \lesssim 2\eps + \int_1^t \|u\|_{L^2}^{\frac{2(d-1)}{d}} \|u\|_{L^\infty}^\frac{2}{d} \||J|^\beta u\|_{L^2} + \| J|^\beta u\|_{L^2} \|u\|_{L^\infty}^\frac{2}{d}\ds,
        \end{equation*}
        for \eqref{E:SBP}. Under the assumptions on the initial data and the bootstrap hypotheses, we have 
        \begin{align*}
            \| |J|^\beta u\|_{L^2} &\lesssim 2\eps +  \int_1^t \eps^{\frac{2(d-1)}{d}}(C\eps)^{\frac{2}{d}} \frac{1}{s} \| |J|^\beta u\|_{L^2} \ds \qtq{for \eqref{E:VNLS}, and} \\
            \| |J|^\beta u\|_{L^2} &\lesssim 2\eps + \int_1^t (\eps^{\frac{2(d-1)}{d}}(C\eps)^{\frac{2}{d}} + (C\eps)^\frac{2}{d}) \frac{1}{s}\| |J|^\beta u\|_{L^2} \ds \qtq{for \eqref{E:SBP}.}
        \end{align*}
        In particular, by Gr\"onwall and mass conservation, we conclude that there exists $\tilde{C} \ll \eps^{-\frac{2}{d}}$ so that
        \begin{equation*}
            \| \jbrak{J}^\beta u\|_{L^2} \lesssim 2\eps \jbrak{t}^{\tilde{C}\eps^{2}} \qtq{for \eqref{E:VNLS},}
        \end{equation*}
        and 
        \begin{equation*}
            \| \jbrak{J}^\beta u\|_{L^2} \lesssim 2\eps \jbrak{t}^{\tilde{C}\eps^{2} + (C\eps)^{\frac{2}{d}}} \qtq{for \eqref{E:SBP}.}
        \end{equation*}

        It now remains to close the bootstrap and establish the claimed pointwise decay estimate. First, by the triangle inequality we have that 
        \begin{equation*}
            |u(t, 2vt)| \leq |t|^{-\frac{d}{2}}|\gamma(t,v)| + |e^{i\frac{|x|^2}{4t}}u(t, 2vt) - t^{-\frac{d}{2}}\gamma(t,v)|.
        \end{equation*}
        In light of \Cref{L:gammabounds}, the second term is controlled by 
        \begin{equation}\label{E:diffctrlbd}
            |e^{i\frac{|x|^2}{4t}}u(t, 2vt) - t^{-\frac{d}{2}}\gamma(t,v)| \lesssim t^{-\frac{d}{2}-\frac{1}{10}}\| |J|^\beta u \|_{L_x^2},
        \end{equation}
        and so combining this with the bounds for $\| \jbrak{J}^\beta u\|_{L^2}$ from the Gr\"onwall argument above, we conclude that 
        \begin{equation*}
            \eqref{E:diffctrlbd} \lesssim 2\eps|t|^{-\frac{d}{2}- \frac{1}{10d} + \tilde{C}\eps^2} \qtq{for \eqref{E:VNLS}},
        \end{equation*}
        and in the case of \eqref{E:SBP}
        \begin{equation*}
            \eqref{E:diffctrlbd} \lesssim 2\eps|t|^{-\frac{d}{2}- \frac{1}{10d} + \tilde{C}\eps^2 + (C\eps)^\frac{2}{d}}.
        \end{equation*}
        This term is acceptable; with our choice of sufficiently small $\eps$, we see that it is decaying faster than $t^{-\frac{d}{2}}$.
        All that remains is to estimate $|\gamma(t, v)|$. At this point we will prove the result for only \eqref{E:SBP}, as the argument for \eqref{E:VNLS} will work in the same way with only minor modifications.

        To this end, we introduce the integrating factor 
        \begin{equation*}
            B_{\rm{SBP}}(t) = \exp\left(-\int_{1}^{t}\frac{i}{2s}(\K \ast |\gamma(s, v)|^2 - |\gamma(s, v)|^\frac{2}{d}) \ds\right)
        \end{equation*}
        Noting that this function satisfies $|B_{\rm{SBP}}(t)| = 1$ for all $t$ and multiplying this function through \eqref{E:SBPode}, we see that after taking absolute values and applying the triangle inequality, we have
        \begin{equation*}
            |\gamma(t,v)| \lesssim |\gamma(1,v)| + \int_1^t |\cR(s,v)|\ds.
        \end{equation*}
        Using the bounds from \Cref{L:SBPgammaode} and inserting the bootstrap hypothesis for $\|u\|_{L^\infty}$, we can estimate $\cR$ by 
        \begin{equation*}
            \|\cR(s,v)\|_{L_v^\infty} \lesssim \begin{multlined}[t]
                \eps\jbrak{s}^{-\frac{3}{2}} + 2\eps \jbrak{s}^{-\frac{3}{2}+\tilde{C}\eps^2+ (C\eps)^{\frac{2}{d}}} + K\eps^3 \jbrak{s}^{-d-\frac{1}{20d}+\tilde{C}\eps^2+(C\eps)^\frac{2}{d}} \\ 
                + K\eps^3\jbrak{s}^{-1-\frac{1}{20d}+\tilde{C}\eps^2+(C\eps)^{\frac{2}{d}}} + K\eps^{1+\frac{2}{d}}\jbrak{s}^{-1-\frac{1}{10d}+\tilde{C}\eps^2+(C\eps)^{\frac{2}{d}}},
            \end{multlined}
        \end{equation*}
        where we set $K = C^{1+\frac{1}{d}}$ for notational brevity, and we recall that $\tilde{C} \ll \eps^{-\frac{2}{d}}$. In particular, examining the exponents in the powers of $\jbrak{s}$, we need to choose $\eps \ll 1$ so that the hierarchy 
        \begin{equation*}
            \tilde{C}\eps^2 \ll \eps^{2-\frac{2}{d}} \ll (C\eps)^{\frac{2}{d}} < \frac{1}{20d}
        \end{equation*}
        holds. Indeed, such a choice ensures that all the powers of $s$ are integrable in $t$, and this yields the estimate 
        \begin{equation*}
            |\gamma(t, v)| \lesssim 3\eps + K\eps^2 + K\eps^{1+\frac{2}{d}}.
        \end{equation*}
        Choosing $\eps$ sufficiently small and recalling the definition of $K$ from above, we can guarantee that 
        \begin{equation*}
            |\gamma(t,v)| \lesssim \eps.
        \end{equation*}
        By applying the triangle inequality in the form 
        \begin{equation*}
            |u(t,2vt)| \leq |t|^{-\frac{d}{2}}|\gamma(t, v)| + |e^{i\frac{|x|^2}{4t}}u(t,2vt) - t^{-\frac{d}{2}}\gamma(t,v)|,
        \end{equation*}
        inserting the estimates on both terms from above, and taking a supremum in $v$, we conclude the desired estimate for $u$.
    \end{proof}
    \section{Asymptotic Behavior of Solutions}\label{S:asymptotics}
        In the previous sections, we closed the bootstrap argument which established the sharp decay estimates for solutions to \eqref{E:VNLS} and \eqref{E:SBP}. Using these estimates, we will now prove the claimed asymptotic expansions from \Cref{T:maintheorem}.

        Recall the integrating factor 
        \begin{equation}
            B_{\rm{SBP}}(t) = \exp\left(-\int_1^t \frac{i}{2s}(\K \ast |\gamma(s,v)|^2 - |\gamma(s, v)|^{\frac{2}{d}})\ds\right)
        \end{equation}
        associated to \eqref{E:SBPode}. We have a similar integrating factor for \eqref{E:gammaODE}, \emph{viz}. 
        \begin{equation}
            B_{\rm{Har}}(t) = \exp\left(-\int_1^t \frac{i}{2s}(|\cdot|^{-1}\ast|\gamma(s,v)|^2)\ds\right).
        \end{equation}

        We proceed with the Hartree case first. Begin by setting $G_{\rm{Har}}(t) = B_{\rm{Har}}(t)\gamma(t,v)$. Then using \eqref{E:gammaODE} and the bootstrap argument from earlier, we deduce
        \begin{equation}
            \|\partial_t G_{\rm{Har}}(t)\|_{L_v^\infty} \lesssim \eps^3t^{-1-\frac{1}{20d}+\tilde{C}\eps^2}.
        \end{equation}
        By the fundamental theorem of calculus, there exists $\mathscr{W}_0 \in L^\infty$ so that
        \begin{equation}
            G_{\rm{Har}}(t) \to \mathscr{W}_0 \qtq{in} L^\infty \text{ as } t\to \infty.
        \end{equation}
        Taking absolute values in the definition of $G_{\rm{Har}}$ and using that $B_{\rm{Har}}$ has modulus $1$, it follows that $|\gamma(t, v)| \to |\mathscr{W}_0|$ in the same norm as $t \to \infty$. In particular, inserting this expansion into $B_{\rm{Har}}(t)$, we see that 
        \begin{equation*}
            B_{\rm{Har}}(t) = \exp(-i\log(t)(|\cdot|^{-1} \ast |\mathscr{W}_0|^2) - i \Phi(t)),
        \end{equation*}
        where again the function $\Phi(t)$ has some limit $\Phi_\infty$ in $L^\infty$ as $t \to \infty$. Splitting off this limiting behavior by setting $\mathscr{W} = e^{-i\Phi_\infty}\mathscr{W}_0$, it immediately follows that we have the expansion 
        \begin{equation*}
            \gamma(t,v) = e^{-i\log(t)(|\cdot|^{-1} \ast |\mathscr{Q}(v)|^2)}\mathscr{W}(v) + \mathcal{O}(t^{-\frac{1}{10d}}) \qtq{as} t \to \infty.
        \end{equation*}
        By the same replacement/triangle inequality argument as in the proof of the bootstrap argument, this completes the proof of the asymptotic expansion. 

        A virtually identical argument gives the result for \eqref{E:SBP}, upon replacing the integrating factor with the corresponding one for \eqref{E:SBPode} and running the same argument with the fundamental theorem of calculus.
    \appendix
        \section{Interpolation and Lorentz Spaces} \label{AS:InterpAppendix}
            In this appendix, we will give the details of the claim in \Cref{L:interplemma}. To begin, \cite[Theorem 5.3.1]{berghInterpolationSpacesIntroduction1976} is as follows:
            \begin{theorem*}
                Suppose that $0 < p_0, p_1, q_0, q_1 \leq \infty$ and write 
                \begin{equation*}
                    \frac{1}{p} = \frac{1-\theta}{p_0} + \frac{\theta}{p_1} \qtq{where} 0 < \theta < 1.
                \end{equation*}
                Then if $p_0 \neq p_1$, we have 
                \begin{equation*}
                    [L^{p_0, q_0}, L^{p_1, q_1}]_{\theta, q} = L^{p, q}.
                \end{equation*}
                If $p_0 = p_1 = p$, this result holds provided that 
                \begin{equation*}
                    \frac{1}{q} = \frac{1-\theta}{q_0} + \frac{\theta}{q_1}.
                \end{equation*}
            \end{theorem*}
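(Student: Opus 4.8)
The plan is to reduce the entire statement to a single explicit computation --- the $K$-functional of the Banach couple $(L^1, L^\infty)$ --- and then obtain the general Lorentz couples by the reiteration theorem. Recall that
\[
\|f\|_{[A_0, A_1]_{\theta, q}} = \left( \int_0^\infty \bigl( s^{-\theta} K(s, f)\bigr)^q \frac{\d s}{s}\right)^{\frac1q}, \qquad K(s, f) := \inf_{f = f_0 + f_1}\bigl(\|f_0\|_{A_0} + s\|f_1\|_{A_1}\bigr),
\]
with the usual supremum modification when $q = \infty$. The one genuinely substantive step is the classical identity
\[
K(s, f; L^1, L^\infty) = \int_0^s f^*(u)\,\d u,
\]
$f^*$ denoting the decreasing rearrangement: the upper bound comes from the explicit splitting that truncates $f$ at height $f^*(s)$ (the truncated part lies in $L^\infty$ with norm $f^*(s)$, the remainder in $L^1$ with norm $\int_0^s(f^*(u)-f^*(s))\,\d u$, and the two contributions add up to $\int_0^s f^*$); the lower bound follows from the subadditivity estimate $\int_0^s (f_0 + f_1)^*(u)\,\d u \le \int_0^s f_0^*(u)\,\d u + \int_0^s f_1^*(u)\,\d u \le \|f_0\|_{L^1} + s\|f_1\|_{L^\infty}$, valid for every admissible splitting.

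Next I would identify $(L^1, L^\infty)_{\theta, q} = L^{p,q}$ with $\tfrac1p = 1-\theta$. Substituting the $K$-functional and writing $f^{**}(s) = \tfrac1s\int_0^s f^*$, the interpolation norm becomes $\bigl\| s^{1-\theta} f^{**}(s)\bigr\|_{L^q(\d s/s)}$. Since $f^*$ is nonincreasing we have $f^* \le f^{**}$ pointwise, so this dominates $\bigl\| s^{1-\theta} f^*(s)\bigr\|_{L^q(\d s/s)}$; conversely, Hardy's inequality applied to the averaging operator $g \mapsto \tfrac1s\int_0^s g$ --- which is bounded on $L^q$ of the measure $s^{(1-\theta)q-1}\,\d s$ precisely because $\theta > 0$ --- gives the reverse bound. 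Up to a harmless constant, $\bigl\| s^{1-\theta}f^*(s)\bigr\|_{L^q(\d s/s)}$ is one of the standard expressions for $\|f\|_{L^{p,q}}$, so the two (quasi-)norms are equivalent.

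For a general couple with $p_0 \ne p_1$, set $\tfrac1{p_i} = 1-\theta_i$; then $\theta_0 \ne \theta_1$ and $L^{p_i,q_i} = (L^1,L^\infty)_{\theta_i, q_i}$ by the previous step. The reiteration (stability) theorem for real interpolation, \cite[Section 3.5]{berghInterpolationSpacesIntroduction1976}, which in this form requires exactly $\theta_0 \neq \theta_1$, yields
\[
\bigl((L^1,L^\infty)_{\theta_0,q_0},\,(L^1,L^\infty)_{\theta_1,q_1}\bigr)_{\theta,q} = (L^1,L^\infty)_{\eta,q}, \qquad \eta = (1-\theta)\theta_0 + \theta\theta_1.
\]
Then $1-\eta = (1-\theta)(1-\theta_0) + \theta(1-\theta_1) = \tfrac{1-\theta}{p_0} + \tfrac{\theta}{p_1} = \tfrac1p$, and one more application of the identification of the previous paragraph turns the right-hand side into $L^{p,q}$. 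When $p_0 = p_1 = p$ we instead have $\theta_0 = \theta_1 =: \eta$ with $\tfrac1p = 1-\eta$, and it is exactly the $\theta_0 = \theta_1$ branch of the reiteration theorem --- available precisely under the hypothesis $\tfrac1q = \tfrac{1-\theta}{q_0} + \tfrac{\theta}{q_1}$ --- that produces $(L^{p,q_0},L^{p,q_1})_{\theta,q} = (L^1,L^\infty)_{\eta,q} = L^{p,q}$.

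The main obstacle is concentrated in the first step, the exact evaluation of $K(s,f;L^1,L^\infty)$ via decreasing rearrangements, where the rearrangement inequalities must be handled with some care; everything afterward is bookkeeping with Hardy's inequality and the (black-boxed) reiteration theorem. One should also keep an eye on the degenerate exponents --- $q = \infty$, and $p_i = \infty$ (where $L^{\infty,q}$ is meaningful only for $q = \infty$ under the usual conventions) --- and on the fact that when $p = 1$ or the second index is small the Lorentz functionals are only quasi-norms, so that all equalities of spaces are understood up to equivalence of quasi-norms and Hardy's inequality is invoked in the form valid for nonincreasing $f^*$ and all $0 < q \le \infty$.
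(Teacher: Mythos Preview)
The paper does not actually prove this theorem: it appears in the appendix only as a verbatim restatement of \cite[Theorem~5.3.1]{berghInterpolationSpacesIntroduction1976}, with no argument supplied beyond the citation. Your proposal, by contrast, outlines the standard proof --- compute $K(s,f;L^1,L^\infty) = \int_0^s f^*$, identify $(L^1,L^\infty)_{\theta,q}$ with $L^{p,q}$ via Hardy's inequality on the averaging operator, then lift to general Lorentz couples by reiteration --- which is essentially the route taken in the cited reference itself. The argument is correct and carefully flagged for the quasi-Banach regime $0 < p_i, q_i \le \infty$.

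One minor terminological point: the $p_0 = p_1$ case is not usually packaged as a ``$\theta_0 = \theta_1$ branch'' of the reiteration theorem; the standard reiteration statement (e.g.\ \cite[Theorem~3.5.3]{berghInterpolationSpacesIntroduction1976}) explicitly assumes $\theta_0 \ne \theta_1$, and the diagonal case is handled separately --- for instance by transporting $L^{p,q}$ to a weighted $L^q\bigl((0,\infty),\tfrac{\d t}{t}\bigr)$ via $f \mapsto t^{1/p}f^*(t)$ and interpolating the $L^q$ scale directly, or via the power theorem. The conclusion is the same, and the hypothesis $\tfrac1q = \tfrac{1-\theta}{q_0} + \tfrac{\theta}{q_1}$ is exactly what that argument needs, so this is only a matter of how the black box is labeled.
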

            Applying this theorem with $(p_0, q_0) = (2, 2)$, $(p_1, q_1) = (\infty, \infty)$ and $(p, q) = \left( \frac{2d}{d-1}, 2\right)$ yields the first claim of \Cref{L:interplemma}, as the resulting value for $\theta$ is $\theta = \frac{1}{d} \in (0, 1)$, so that the above Theorem applies. 

            To conclude the remainder of \Cref{L:interplemma} involving interpolation of norms, we note that by definition the space $L^{p, q}$ in the Theorem above is defined by real interpolation. Using \cite[Theorem 3.5.1]{berghInterpolationSpacesIntroduction1976}, we see that $L^{\frac{2d}{d-1}, 2}$ (in the notation of \cite{berghInterpolationSpacesIntroduction1976}) is a space of class $\mathscr{C}\left(\frac{1}{d}, (L^2, L^\infty)\right)$. Using \cite[Item (b), p.49]{berghInterpolationSpacesIntroduction1976} and the fact that being class $\mathscr{C}\left(\frac{1}{d}, (L^2, L^\infty)\right)$ implies being class $\mathscr{C}_J\left(\frac{1}{d}, (L^2, L^\infty)\right)$, we conclude that 
            \begin{equation*}
                \|u\|_{L^{\frac{2d}{d-1}, 2}} \lesssim \|u\|_{L^2}^{\frac{d-1}{d}}\|u\|_{L^\infty}^{\frac{1}{d}},
            \end{equation*}
            which is the claim.
        \nocite{*}
    \bibliography{testing-by-wavepackets}
    \bibliographystyle{bjoern_style}
\end{document}